\documentclass[12pt,reqno]{amsart}
\DeclareFontFamily{OML}{script}{}
\DeclareFontShape{OML}{script}{m}{it}
{ <5-20> rsfs10 }{}
\DeclareMathAlphabet{\mathscript}{OML}{script}{m}{it}

\renewcommand{\mathcal}[1]{{\mathscript #1}\hspace{0.2ex}}
\usepackage{cite}
\usepackage{color}
\ifx\red\undefined
\newcommand{\red}{\color{red}}

\fi
\usepackage[ansinew]{inputenc}
\usepackage[encapsulated]{CJK}

\usepackage{graphicx,graphics}
\usepackage{cite}
\usepackage{pifont}
\usepackage{amsthm}
\usepackage{subfigure}
\usepackage{amscd}
\usepackage{amsmath}
\usepackage{latexsym}
\usepackage{amsfonts}
\usepackage{amssymb}
\usepackage{color}
\usepackage{multicol}
\usepackage{amsmath,amssymb,amsthm,amsfonts,mathrsfs}
\usepackage{hyperref}
\hypersetup{hypertex=true,
            colorlinks=true,
            linkcolor=blue,
            anchorcolor=blue,
            citecolor=blue}
\usepackage{bm}

\usepackage{tikz}
\usepackage{pgfplots}
\usetikzlibrary{patterns}
\usepackage{enumitem}

\definecolor{ocre}{RGB}{64,123,121}

\allowdisplaybreaks[4]
\ifx\text\undefined
\newcommand{\text}{\mbox}
\fi
\ifx\operatorname\undefined
\newcommand{\operatorname}{\mathop}
\fi

\allowdisplaybreaks[3]

\newtheorem{theorem}{Theorem}[section]
\newtheorem{lemma}[theorem]{Lemma}

\newtheorem{remark}[theorem]{Remark}
\newtheorem{corollary}[theorem]{Corollary}

\theoremstyle{remark}

\numberwithin{equation}{section}

\begin{document}
\title{Liouville-type theorems and universal estimates for semilinear elliptic equations involving the product of the function and its gradient}
\author{Wenguo Liang and Zhengce Zhang}
\date{\today}
\address[Wenguo Liang]{School of Mathematics and Statistics, Xi'an Jiaotong University,
Xi'an, 710049, P. R. China}
\email{liangwenguo@stu.xjtu.edu.cn}
\address[Zhengce Zhang]{School of Mathematics and Statistics, Xi'an Jiaotong University,
Xi'an, 710049, P. R. China}
\email{zhangzc@mail.xjtu.edu.cn}
\thanks{Corresponding author: Zhengce Zhang}
\thanks{Keywords: Liouville-type theorems; Local gradient estimates; Differential inequalities}
\thanks{2020 Mathematics Subject Classification: 35A01; 35B45; 35B50; 35J61}
\thanks{ }

\begin{abstract}
In this paper, we study local and global properties of positive solutions to the equation $-\Delta u=u^p|\nabla u|^q$ in a domain $\Omega$ of $\mathbb R^N$, where $p$ and $q$ are parameters. We introduce a linear operator to construct the differential inequality to obtain gradient estimates, and further establish Liouville-type theorems. As an application, we derive universal estimates for local solutions. Some of our results are new, as we extend the condition $p+q<(N+3)/(N-1)$ considered by He, Hu and Wang [Math. Z. 313 (2026), No. 6] to a wider range of parameters.
\end{abstract}

\maketitle

{\tableofcontents}  

\section{Introduction}

In this paper, we consider the qualitative properties of  equations with the form
\begin{equation}\label{eq1}
  -\Delta u=u^p|\nabla u|^q
\end{equation}
in $\Omega$, where $\Omega$ is a domain in $\mathbb R^N$, $N\geq 2$, and $p,q$ are parameters. For $A\geq 0$, let
\begin{equation}\label{levelset}
  \Omega_A:=\{x\in\Omega;\,|\nabla u(x)|=A\}
\end{equation}
be the level set of gradient for $u$. The term $u^p|\nabla u|^q$ is singular with $q<0$ in $\Omega_0$. A function $u\in C^1(\Omega)$ is said to be a weak solution of \eqref{eq1} provided that
\begin{equation*}
  u^p|\nabla u|^q\in L_{loc}^1(\Omega),
\end{equation*}
and
\begin{equation*}
  \int_\Omega \langle\nabla u,\nabla \psi\rangle =\int_\Omega u^p|\nabla u|^q\psi \quad\text{for all}\ \psi\in C^\infty_0(\Omega).
\end{equation*}
By the strong maximum principle \cite[Lemma 2.1]{Serrin-Zhou-Acta}, any nontrivial nonnegative solution of \eqref{eq1} is strictly positive.

In the case $q=0$, equation \eqref{eq1} reduces to the classical Lane--Emden equation
\begin{equation}\label{Lane-Emden}
  -\Delta u=u^p.
\end{equation}
By using an identity satisfied by the gradient of $u$ and delicate integral estimates, Gidas and Spruck \cite{Gidas-Spruck-1981-CPAM} first  proved that, for $1<p<p_S:=(N+2)/(N-2)$ and $N>2$, nonnegative solutions of \eqref{Lane-Emden} in $\mathbb R^N$ are identically zero. This method was later referred to as the integral Bernstein method in \cite{Bidaut-Veron-Duke-2019}. Chen and Li employed the moving plane method to provide another proof of this nonexistence result in \cite{Chen-Li-Duke-1991}.  The exponent $p_S$ is sharp in the sense that there exist positive radial solutions in $\mathbb R^N$ when $p\geq p_S$. In addition, asymptotic symmetry and local behavior of solutions in the critical case $p=p_S$ were obtained by Caffarelli, Gidas and Spruck \cite{Caffarelli-Giads-CPAM-1989}. Recently, by employing the Moser iteration technique, Wang and Wei \cite{Wang-Wei-JDE-2023} derived  local gradient estimates and  established the nonexistence of positive solutions to \eqref{Lane-Emden} on complete Riemannian manifolds, where the exponent $p$ is allowed to be negative.

In the case $p=0$ and $q>0$, equation \eqref{eq1} reduces to the Hamilton--Jacobi equation
\begin{equation}\label{Ham-Joc}
-\Delta u=|\nabla u|^q.
\end{equation}
By means of a direct Bernstein technique, Lions \cite{Lions-1985-JAM} established pointwise gradient estimates for solutions when $q>1$. Consequently, all classical  solutions of \eqref{Ham-Joc} in $\mathbb R^N$ are constants. Based on the Bernstein gradient estimates from \cite{Lions-1985-JAM} and moving plane method, Filippucci, Pucci and Souplet \cite{Filippucci-Pucci-Souplet-2020-CPDE} proved a Liouville-type classification for \eqref{Ham-Joc} in the half-space, showing that every classical solution is one-dimensional. For gradient estimates of equations concerning more general operators, we refer the reader to \cite{Attouchi-CVPDE-2020,Veron-2014-JFA,Chang-JDE-2023,Chang-DCDS-2020,Handong-Wang-2025-JDE,Han-He-Wang-JFA-2026,Tommaso-Porretta-2016-CPDE}.

We recall some relevant work concerning equation \eqref{eq1}, which was first introduced in its radial form by Caristi and Mitidieri in \cite{Caristi-AdcanceD-1997} to investigate the nonexistence of positive solutions. By using the pointwise Bernstein method and the integral Bernstein method, Bidaut-V\'{e}ron, Garc\'{\i}a-Huidobro and V\'{e}ron  \cite{Bidaut-Veron-Duke-2019} determined various regions of $(p,q)$ for which the Liouville-type property holds. Using a refined integral indentity and integral Bernstein method, Ma and Wu \cite{Ma-Wu-Bull-2026} improved the Liouville-type theorem, precisely, their result is optimal for  $0\leq q\leq1/(N-1)$. Recently, based on the Bernstein method, Lu \cite{Lu-arXive-2026} strengthened the sharp Liouville-type theorem for $0\leq q\leq1-\sqrt{1/(N-1)}$. For $q>2$, Filippucci, Pucci and Souplet \cite{Filippucci-Pucci-Souplet-Adv.stu-2020} obtained that all bounded classical solutions of \eqref{eq1} are constants. Subsequently, using Keller--Osserman comparison and bootstrap method, Bidaut-V\'{e}ron \cite{Bidaut-Veron-Adv.N-2021} completely removed the boundedness condition and generalized the Liouville-type theorem to quasilinear problem. For Liouville-type results concerning quasilinear and nonlocal elliptic equations, we refer to \cite{Chang-Hu-Zhang-NA-2022,Chang-Zhang-NA-2026, Filipuucci-NA-2009,Fillippucci-JDE-2011,Guo-Zhang-ProAMS-2025,Pokhozhaev-2001} and the references therein.

However, to the best of our knowledge, there seems much less known Liouville-type results for $q<0$, expect \cite{He-Hu-Wang-MZ-2026,Sun-Xiao-Xu-MathAnn-2022}. For $p+q<(N+3)/(N-1)$, by employing the Saloff--Coste Sobolev inequality and the Nash--Moser iteration technique, He, Wu and Wang \cite{He-Hu-Wang-MZ-2026} established a Liouville-type theorem for \eqref{eq1} on $N$-dimensional Riemannian manifold. Under some additional conditions on the volume growth of geodesic balls, Sun, Xiao and Xu \cite{Sun-Xiao-Xu-MathAnn-2022} proved that there exists no nontrivial non-negative solution to equation \eqref{eq1} on the Riemannian manifold for arbitrary $(p,q)\in \mathbb R^2$.

Beside the Liouville-type theorem for solutions, universal estimates of local solutions to equation \eqref{eq1} has received extensive attention.  Serrin and Zou \cite{Serrin-Zhou-Acta} observed that Liouville-type theorems for \eqref{Lane-Emden} can be regarded as a consequence and limiting case of universal boundedness theorems. Based on rescaling and a key ``doubling" property, Pol\'{a}\v{c}ik, Quittner and Souplet \cite{Polacik-Quittner-Souplet}  established the universal estimates of the form
\begin{equation}\label{form-M1}
  M_1(u(x)):=\left(u+|\nabla u|^{\beta_1}\right)(x)\leq C\left(1+{\rm dist}^{-\beta_2}(x,\partial\Omega)\right),\quad x\in\Omega,
\end{equation}
where $C,\beta_1,\beta_2>0$. $M_1$ is referred to as the blow-up quantity in the sense that, if \eqref{form-M1} fails, there exist sequence $\Omega_k$, $u_k$, and $x_k\in\Omega_k$ such that $M_1(u_k(x_k))\to\infty$ as $k\to\infty$. Furthermore, by rescaling $u_k$ yields a sequence of functions, and finally induces a subsequence that converges to a global solution of \eqref{Lane-Emden}, which contradicts with the known Liouville-type theorem.  However, since \eqref{eq1} admits arbitrary positive constant solutions, $M_1$ does not work in the rescaling procedure for \eqref{eq1} when $q> 0$. Consequently,  Baldelli and Filippucci \cite{Baldelli-Filippucci-2025-RIMU} obtained an alternative result to the priori estimates of positive solutions to \eqref{eq1}. Recently, for $q>0$, Lu and Zhu \cite{Lu-Zhu-JFA-2026} established a universal estimates of the form
\begin{equation}
  M_2(u(x)):=\left(|\nabla \ln u|^2+u^{p-1}|\nabla u|^q\right)(x)\leq C(N,p,q){\rm dist}^{-2}(x,\partial\Omega),\quad x\in\Omega.
\end{equation}

The primary objectives of the present paper are twofold. Firstly, we aim to establish Liouville-type theorems for \eqref{eq1} in a index region of parameters $p$ and $q$  which are wider than the previous one. Secondly, as applications, we intend to obtain a universal estimates of positive solutions to \eqref{eq1} when $q\geq 0$.

It turns out that the gradient estimates of solutions to equation \eqref{eq1} with $q<0$ is more complicated and thus more delicate analytical techniques are required. Specifically, on the one hand, the Keller--Osserman estimates \cite[Lemma 2.2]{Bidaut-Veron-Duke-2019} requires that equation \eqref{eq1} contains a superlinear absorption term, namely, $p+q>1$  and $q>0$. On the other hand, the Nash--Moser iteration technique employed in \cite{He-Hu-Wang-MZ-2026,Q-W-J-CVPDE-2026} relies on an inequality which holds for $p+q<(N+3)/(N-1)$. In our setting, by constructing a linear differential operator, together with introducing parameters, we establish differential inequalities on the superlevel set of gradient via choosing auxiliary functions, and further obtain local gradient estimates and Liouville-type theorems. Furthermore, instead of using the doubling lemma \cite[Lemma 5.1]{Polacik-Quittner-Souplet}, we derive the universal estimates by utilizing the function involving a blow-up quantity.

To better illustrate the main contributions and ideas of this paper, we shall elaborate on these methods in detail as follows.
\begin{enumerate}[
    label=(\roman*),
    leftmargin=*,
    labelsep=0.5em,        
]
\item \textit{Construction of a linear differential operator.} The Bernstein method depends on a differential inequality associated with $w$, where $w=|\nabla v|^2$, $v=f^{-1}(-u)$ and $f$ is a suitable auxiliary function. We observe that
\begin{equation*}
  \mathcal L(u^\alpha w^\gamma)=u^\alpha \mathcal L(w^\gamma)+w^\gamma\mathcal L(u^\alpha)-2\nabla u^\alpha\cdot\nabla w^\gamma,
\end{equation*}
where linear operator $\mathcal L:=-\Delta +\mathcal H\cdot\nabla$ and $\mathcal H\in \mathbb R^N$. It follows from \eqref{eq1} that
\begin{equation*}
  -\Delta u^\alpha=-\alpha(\alpha-1)u^{\alpha-2}|\nabla u|^2-\alpha u^{\alpha-1}\Delta u<0
\end{equation*}
for $|\nabla u|>0$ with $\alpha<0$, and thus the term $-w^\gamma \Delta u^\alpha$ arising from $w^\gamma\mathcal L(u^\alpha)$ contributes to deriving $\mathcal L(u^\alpha w^\gamma)<0$ for $w>0$. Nevertheless, we still need to handle the additional terms $w^\gamma\mathcal H\cdot\nabla u^\alpha$, $u^\alpha\mathcal H\cdot\nabla w^\gamma$ and $-2\nabla u^\alpha\cdot\nabla w^\gamma$. To this end, we rewrite $w^\gamma\mathcal H\cdot\nabla u^\alpha$ and $u^\alpha \mathcal H\cdot\nabla w^\gamma$ as $\mathcal H_\alpha\cdot \nabla (u^\alpha w^\gamma)$. Therefore, the operator $\mathcal L$ can be replaced by
\begin{equation*}
  \mathcal L_\alpha(z):=-\Delta z+\mathcal H_\alpha \cdot\nabla z
\end{equation*}
with
\begin{equation*}
 \mathcal H_\alpha= \left(q\frac{|f'|^q}{f'}(-f)^p w^{\frac{q-2}2}-2\frac{f''}{f'}+2\alpha \frac{f'}f\right)\nabla v.
\end{equation*}

\item \textit{The subsolution of $\mathcal L_\alpha(z)=0$ on the superlevel set of gradient.} The local pointwise gradient estimates relies on the inequality
    \begin{equation*}
      \mathcal L_\alpha(u^\alpha w^\gamma\eta)\leq -(u^\alpha w^\gamma\eta)^\theta+C(\eta),
    \end{equation*}
where $\eta$ is a cut-off function and $\theta,C(\eta)>0$. However, for given $x_0\in\Omega$, it seems that this inequality does not hold in $\{x\in B_{3R/4};\, (u^\alpha w^\gamma\eta)(x)>0\}$ for $\alpha<0$, where $B_R=B_R(x_0)$ is the open ball with center $x_0$ and radius $R={\rm dist}(x_0,\partial\Omega)$. Fortunately, it holds that \begin{equation*}
  \mathcal L_\alpha (u^\alpha w^\gamma\eta)<0 \quad \text{in}\ \{x\in B_{3R/4};\, m^{-\alpha}(u^\alpha w^\gamma\eta)(x)>C(\eta)\}
\end{equation*}
under the condition that $u$ admits a positive local lower bound $m$. By virtue of the maximum principle on the superlevel set of $m^{-\alpha}u^\alpha w^\gamma\eta$, we obtain the estimates for $u^\alpha w^\gamma$ in $\Omega$. We observe that an asymptotic behaviour \cite[Lemma 2.3]{Serrin-Zhou-Acta} of solutions to \eqref{eq1} at infinity guarantees the Liouville-type theorem. The idea of working on the superlevel set of $u^\alpha w^\gamma\eta$ is inspired by the argument proposed by Cirant and Goffi \cite{Girant-Goffi-ARMA-2021}; see also \cite{Cianchi-CPDE-2011,Grenon-CRMA-2006} and the references therein.
\end{enumerate}

It is noteworthy that by introducing parameters $\alpha$ and $\gamma$, we are able to propose gradient estimates over a wider range of exponents $p$ and $q$. Precisely, when $\alpha=0$, this estimates reduces to that of $w^\gamma$ with sufficiently large $\gamma$ under the condition $p+q<(N+3)/(N-1)$, whose critical line is parallel to line $p+q=1$. Furthermore, setting $\gamma=1$, the estimates reduces to $u^\alpha w$, in the range $p+(1-\alpha/2)q<(N+2)/N-\alpha/2$ for $\alpha<0$. Its critical line is a rotated version of line $p+q=1$. Figure \ref{fig:parameter_space} below illustrates distinct  separatrices for $N=2$ and $\alpha=-2\sigma=(1-\sqrt{(N+8)/N})/2$ on the plane.

In addition, during the derivation of a priori estimates for positive classical solutions, the blow-up quantities $M_1$ and $M_2$  fail to be used in rescaling procedure when proving the nontriviality and global boundedness of rescaled solutions. To overcome this difficulty, we introduce the quantity
\begin{equation*}
  M(u(x))=\left(u^p|\nabla u|^{q}\right)^{\beta}(x),\quad x\in\Omega
\end{equation*}
with $q\geq 0$ and $\beta=\beta(p,q)>0$. Drawing on the method proposed by Chen, Li, Wu and Xin \cite{Chen-Li-Wu-Xin-MathAnn-2026}, we build auxiliary functions containning $M$. This allows us to avoid invoking doubling lemma and reach the desired estimates via a direct calculation.

We are now in a position to present our main results.

\begin{figure}
\centering
\begin{tikzpicture}
\begin{axis}[
    axis lines = middle,
    axis line style = {->,line width=0.98pt},
    axis equal,
    xlabel = {$p$},
    ylabel = {$q$},
    xmin = -10, xmax = 40,
    ymin = -35, ymax = 15,
    ticks = none,
    width = 8cm,
    height = 8cm,
    xlabel style={at={(axis description cs:1,0.65)},anchor=west}, 
    ylabel style={at={(axis description cs:0.20,1)},anchor=south}, 
    axis on top = true,  
]
\fill[green!40,opacity=0.9]
    (axis cs:-10,15)
   -- (axis cs:40, -35)
    -- (axis cs:-10, -35)
    -- cycle;

\fill[brown!40,opacity=0.9]
    (axis cs:-10,15)
   -- (axis cs:0, 15)
    -- (axis cs:0, 5)
    -- cycle;

\fill[green!40,opacity=0.9]
    (axis cs:40,-35)
   -- (axis cs:40, -29)
    -- (axis cs:14, -9)
    -- cycle;

\node at (axis cs:0,0) [below left] {$0$};

\addplot[
    domain=-10:40,
    samples=200,
    line width=0.8pt,
   loosely dotted
] (x,{1-x});

\addplot[
    domain=-10:40,
    samples=200,
    line width=0.8pt,
    solid
] (x,{(2.3-x)/1.3});

\addplot[
    domain=-10:40,
    samples=200,
    line width=0.8pt,
    densely dashed
] (x,{5-x});

\node at (axis cs:0,0) [below left] {$0$};

\end{axis}
\end{tikzpicture}
\begin{tikzpicture}

\draw[densely dashed, thick]
  (3,4.5)-- (3.6,4.5);
\node[anchor=west] at (3.5,4.5)
{\begin{tabular}{l}
\footnotesize The relation $p+q=(N+3)/(N-1)$, see\\
\footnotesize Theorem \ref{them:p+q<(N+3)/}.
    \end{tabular}
};

\draw[solid,thick]
  (3,3.2)-- (3.6,3.2);
\node[anchor=west] at (3.5,3.2)
{\begin{tabular}{l}
\footnotesize The relation $p+(1+\sigma)q=(N+2)/N+\sigma$ with \\
\footnotesize $\sigma=\big(\sqrt{(N+8)/N}-1\big)/4$, see Theorem \ref{corol:<p+q<}.
    \end{tabular}
};

\draw[dotted,thick]
  (3,2.1)-- (3.6,2.1);
\node[anchor=west] at (3.5,2.1)
{\begin{tabular}{l}
\footnotesize The relation $p+q=1$.
    \end{tabular}
};

\path[fill=green!40]
  (3,1.1)-- (3.6,1.1)-- (3.6,1.4)-- (3,1.4)-- cycle;
\node[anchor=west] at (3.5,1.25)
{\begin{tabular}{l}
\footnotesize The range of $p$ and $q$ in Theorems \ref{them:p+q<(N+3)/} and \ref{corol:<p+q<}.
    \end{tabular}
};

\path[fill=brown!40]
  (3,0.1)-- (3.6,0.1)-- (3.6,0.4)-- (3,0.4)-- cycle;
\node[anchor=west] at (3.5,0.25)
{\begin{tabular}{l}
\footnotesize The range of $p$ and $q$ in Theorem \ref{them:p<=0}.
    \end{tabular}
};

\end{tikzpicture}	
\caption{The range of $p$ and $q$ for Liouville results when $N=2$.}\label{fig:parameter_space}
\end{figure}

\begin{theorem}\label{them:p+q<(N+3)/}
 Let $u$ be a positive solution of \eqref{eq1} in $\Omega$.  Assume that
\begin{equation*}
p+q<\frac{N+3}{N-1}.
\end{equation*}
Then there exists a constant $C=C(N,p,q)>0$ such that
\begin{equation}\label{estm-nabla u/u}
  \left(\frac{|\nabla u|}{u}\right)(x)\leq C {\rm dist}^{-1}(x,\partial \Omega),\quad x\in\Omega.
\end{equation}
\end{theorem}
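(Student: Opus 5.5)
Set $v=\ln u$, so that $\nabla v=\nabla u/u$, and take $w=|\nabla v|^2$; estimate \eqref{estm-nabla u/u} is then a pointwise bound on $w$. This corresponds to the auxiliary function $f$ with $-u=f(v)=-e^{v}$ in the framework described in the introduction, and to the choice $\alpha=0$ with $\gamma$ large. Since $u>0$, equation \eqref{eq1} becomes
\begin{equation*}
-\Delta v=|\nabla v|^2+e^{(p+q-1)v}|\nabla v|^q .
\end{equation*}
I will apply the Bochner formula $\tfrac12\Delta w=|D^2v|^2+\nabla v\cdot\nabla\Delta v$ on the open set $\{w>0\}$, where everything is smooth; for $q<0$ the singular set $\Omega_0$ is avoided this way. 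The Hessian term will be handled with the refined inequality
\begin{equation*}
|D^2v|^2\ge \frac{(\Delta v)^2}{N}+\frac{N}{N-1}\Big(\frac{\nabla v\cdot\nabla w}{2w}-\frac{\Delta v}{N}\Big)^2 .
\end{equation*}

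Next I substitute $\Delta v=-w-e^{(p+q-1)v}w^{q/2}$ and collect all terms containing $\nabla w$ into a drift $\mathcal H\cdot\nabla w$. The term $\nabla v\cdot\nabla(e^{(p+q-1)v}w^{q/2})$ contributes $(p+q-1)e^{(p+q-1)v}w^{(q+2)/2}$ plus a $\nabla w$-term. After this, the remaining zero-order part should be a quadratic form in $w$ and $E:=e^{(p+q-1)v}w^{q/2}$, roughly
\begin{equation*}
\frac{1}{N-1}\big(w^2+2wE+E^2\big)-(p+q-1)wE+\dots ,
\end{equation*}
with further cross terms coming from the squared bracket. Its positivity (definiteness away from zero) is exactly where the hypothesis $p+q<(N+3)/(N-1)$ enters: the coefficient of $wE$ is $\tfrac{2(N+1)}{N-1}\cdot$const$-(p+q-1)$, up to constants, and the arithmetic must be arranged so that the discriminant condition reads $p+q-1<4/(N-1)$. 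If the form is not positive definite outright, I will use the freedom of raising to a power $w^\gamma$ and absorbing $\nabla w$-squared terms, as in the paper's remark about large $\gamma$.

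The target inequality is
\begin{equation*}
\mathcal L(w)=-\Delta w+\mathcal H\cdot\nabla w\le -c\,w^2 \quad\text{on }\{w>0\},\qquad c=c(N,p,q)>0 .
\end{equation*}
This computation is the main obstacle. Because $E$ can be arbitrarily large or small relative to $w$, the positivity must be uniform in the ratio $E/w$, and the exponent $q<0$ allows $E$ to blow up near $\Omega_0$.

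With the differential inequality in hand, I localize. Fix $x_0$ and $R=\mathrm{dist}(x_0,\partial\Omega)$, and take a cutoff $\eta$ supported in $B_{3R/4}(x_0)$ with $|\nabla\eta|\le C\eta^{1/2}/R$ and $|\Delta\eta|\le C/R^2$. I consider $z=\eta w$ at an interior maximum point, which must lie in $\{w>0\}$ unless $z\equiv0$. There $\nabla w=-w\nabla\eta/\eta$, so the drift term becomes $-w\,\mathcal H\cdot\nabla\eta$. The drift contains $\nabla v$ and $E\nabla v/w$, and so has size at most $C(w^{1/2}+E\,w^{-1/2})$. To absorb the $E$-contribution into the good term I need to keep the $E^2$ part of the quadratic form, i.e. to aim for $\mathcal L w\le -c(w^2+E^2)$, and then use Young's inequality. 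This gives
\begin{equation*}
c\,\eta w^2\le C\Big(\frac{w}{R^2}+\frac{w^{3/2}\eta^{1/2}}{R}\Big)
\end{equation*}
at the maximum point, hence $\eta w\le CR^{-2}$ there. Evaluating at $x_0$ yields $|\nabla u|/u\le CR^{-1}$, with $C$ depending only on $N,p,q$, which is \eqref{estm-nabla u/u}.
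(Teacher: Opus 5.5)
Your argument is correct, and it reaches the sharp threshold by a different and shorter route than the paper. The paper (Lemma \ref{lem:e1}) uses the same Hessian bound $|D^2v|^2\ge v_{11}^2+\frac1{N-1}(\Delta v-v_{11})^2$. It then removes the cross terms $v_{11}w$ and $v_{11}E$ by Young's inequality with weights $\gamma^{\pm1/2}$. The resulting $+\gamma^{1/2}v_{11}^2$ has to be paid for by the extra good term $-(\gamma-1)w^{-1}|\nabla w|^2\le-4(\gamma-1)v_{11}^2$, which exists only for $w^\gamma$. So the threshold $(N+3)/(N-1)$ is recovered only up to $O(\gamma^{-1/2})$, and $\gamma=\gamma_0(N,p,q)$ must be large. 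You instead keep $\frac{2}{N-1}v_{11}\Delta v=-\frac{w+E}{(N-1)w}\,\nabla v\cdot\nabla w$ inside the drift, which costs nothing at zeroth order. The computation therefore closes already at $\gamma=1$:
\[
-\tfrac12\Delta w+b\cdot\nabla w\le-\tfrac{1}{N-1}\big(w^2+E^2\big)+\Big(p+q-1-\tfrac{2}{N-1}\Big)wE,\qquad |b|\le C\big(w^{1/2}+Ew^{-1/2}\big).
\]
Its right side is $\le -c(w^2+E^2)$ on $\{w,E\ge0\}$ exactly when $p+q-1<\frac{4}{N-1}$. The coefficient of $wE$ involves $\frac{2}{N-1}$, not $\frac{2(N+1)}{N-1}$, but your discriminant condition is the right one, and no fallback to $w^\gamma$ is needed.

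The price of your route is the drift term $Ew^{-1/2}$, which is unbounded near $\Omega_0$ when $q<0$. Your max-point argument for $\eta w$ handles it: the maximum lies in $\{w>0\}$, and the retained $-cE^2$ absorbs $Ew^{1/2}|\nabla\eta|$ by Young---the same device the paper uses for $|q|u^{p+q-1}w^{(q+1)/2}|\nabla\eta|$. Working at a maximum point also makes the paper's superlevel-set maximum principle (Lemma \ref{lem:maxi-pric}) and its check that $\mathcal H_0\in L^\infty(D_A^+)$ unnecessary. It also avoids absorbing $\langle\nabla w,\nabla\eta\rangle$ into $\frac{\gamma(\gamma-1)}{4}w^{-1}|\nabla w|^2$. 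What the paper's heavier $u^\alpha w^\gamma$ framework buys is reuse: the same general lemma drives Theorems \ref{them:p+q>=(N+3)/} and \ref{them:p<=0}.
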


We remark that gradient estimates holds for the same range of $p,q$ values as those arguments in \cite[Corollary 1.3]{He-Hu-Wang-MZ-2026}, whose proof employs integral inequalities and iteration technique. By contrast, our arguments rely on local differential inequalities and the maximum principle,  we apply the transformation $v=f^{-1}(-u)$ with $f(s)=-e^s$ and define $w=|\nabla v|^2$. Then for $\gamma\geq 1$,  we arrive at
\begin{equation*}
  w^{1-\gamma}\mathcal L(w^\gamma)\leq \gamma\left(-(\gamma-1)w^{-1}|\nabla w|^2-2|D^2 v|^2+2(p+q-1)u^{p+q-1} w^{\frac{q+2}2}\right),
\end{equation*}
where $\mathcal L(z):= -\Delta z-\mathcal H\cdot\nabla z$ with $\mathcal H:=\mathcal H(v,\nabla v,f)$. Introduction of the parameter $\gamma$ allows the positive term $u^{p+q-1}w^{(q+2)/2}$ can be absorbed by the negative terms $-(\gamma-1)w^{-1}|\nabla w|^2$ and $-|D^2v|^2$.  Upon choosing $\gamma$ sufficiently large, we get
\begin{equation*}
\mathcal L(w^\gamma)\leq -u^{2(p+q-1)}w^{q+\gamma-1}-w^{1+\gamma}.
\end{equation*}
Once this is carried out, replacing $\mathcal L(w^\gamma)$ by $\mathcal L(w^\gamma \eta)$ ensures that the preceding argument remains valid for a given cut-off function $\eta$. In addition, it holds that
\begin{equation*}
  \mathcal L(w^\gamma \eta)<0\quad \text{in}\ \Omega':=\big\{x\in B_{3R/4};\,(w^\gamma\eta)(x)>CR^{-2\gamma}\big\},
\end{equation*}
in which $x_0\in\Omega$, $R={\rm dist}(x_0,\partial \Omega)>0$ and $C>0$ depends on parameters. Applying the maximum principle on $\Omega'$, we deduce the gradient estimates in $\Omega$.

From the gradient estimates \eqref{estm-nabla u/u}, we present the following Liouville-type theorem.
\begin{corollary}\label{corol:p+q<}
Assume that $p+q<(N+3)/(N-1)$. Then \eqref{eq1} possesses no nontrivial  positive solution in $\mathbb R^N$.
\end{corollary}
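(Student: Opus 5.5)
Apply Theorem \ref{them:p+q<(N+3)/} with $\Omega=\mathbb R^N$, one ball at a time, and let the radius go to infinity. Let $u$ be a positive solution of \eqref{eq1} in $\mathbb R^N$ and fix $x_0\in\mathbb R^N$. For $R>0$, $u$ restricted to $\Omega_R:=B_{2R}(x_0)$ is a positive solution there, and ${\rm dist}(x_0,\partial\Omega_R)=2R$. The constant $C=C(N,p,q)$ in \eqref{estm-nabla u/u} does not depend on the domain, so we get
\begin{equation*}
 \frac{|\nabla u(x_0)|}{u(x_0)}\leq \frac{C}{2R}.
\end{equation*}
Letting $R\to\infty$ gives $\nabla u(x_0)=0$. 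Since $x_0$ is arbitrary, $\nabla u\equiv0$ in $\mathbb R^N$, and $u\equiv c$ for some constant $c>0$.

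Next we test whether such a constant actually solves \eqref{eq1}.

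When $q>0$, $|\nabla u|^q\equiv 0$, so the equation reads $0=0$. Positive constants therefore are solutions, and they are exactly the trivial solutions the statement excludes. So "nontrivial" must mean nonconstant. Positive constants are admissible here because \eqref{eq1} admits arbitrary constant solutions, as the paper notes.

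When $q=0$, the equation becomes $0=-\Delta c=c^p>0$, a contradiction. No positive solution exists at all.

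When $q<0$, the right-hand side $u^p|\nabla u|^q$ equals $+\infty$ wherever $\nabla u=0$. Here $\nabla u\equiv0$, so this fails the requirement $u^p|\nabla u|^q\in L^1_{loc}$ in the weak formulation, and a constant is not a solution. Again no positive solution exists.

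Alternatively, one can avoid this case analysis by integrating \eqref{estm-nabla u/u} to get the Harnack-type control $|\nabla \ln u|\le C/R$. This also forces $\ln u$ to be constant.

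The main obstacle is making sure that the constant $C$ in Theorem \ref{them:p+q<(N+3)/} is genuinely independent of $\Omega$ and of $u$, which the statement asserts. Beyond that, the only care needed is interpreting "nontrivial" correctly in the three sign cases for $q$.
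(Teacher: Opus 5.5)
Your proof is correct and follows the paper's argument: apply Theorem \ref{them:p+q<(N+3)/} on balls of radius tending to infinity (the paper simply lets ${\rm dist}(x,\partial\Omega)\to\infty$), use that $C=C(N,p,q)$ is independent of the domain to get $\nabla u\equiv 0$, and then rule out positive constants when $q<0$ through the requirement $u^p|\nabla u|^q\in L^1_{loc}$ in the weak formulation. Your explicit treatment of $q=0$ (where $-\Delta c=c^p>0$ is impossible) and of $q>0$ (where constants are the trivial solutions) is more careful than the paper, which spells out only the case $q<0$; the closing ``alternative'' via $|\nabla \ln u|\le C/R$ does not actually avoid that case analysis, but it is not needed.
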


\begin{remark}
By applying the gradient estimates \eqref{estm-nabla u/u}, we conclude that $\nabla u\equiv0$, and $u$ is a positive constant. However, for $q<0$, this contradicts  with  $u^p|\nabla u|^q\in L^1_{loc}(\mathbb R^N)$.
\end{remark}

The next result provides the gradient estimates for positive solutions under a local lower bound.

\begin{theorem}\label{them:p+q>=(N+3)/}
Let $u$ be a positive solution of \eqref{eq1} in $\Omega$ that satisfies $u\geq m$ for some constant $m>0$. Assume that
\begin{equation}\label{Q(p,q)<sigma}
 p+\left(1+\sigma\right)q<\frac{N+2}N+\sigma
\end{equation}
with $0<\sigma<\big(\sqrt{(N+8)/N}-1\big)/4$. Then there exists a constant $C=C(N,p,q,\sigma)>0$ such that
\begin{equation}\label{|nabla-u|<m^-1}
  \left|\nabla u^{-\sigma}(x)\right|\leq Cm^{-\sigma}{\rm dist}^{-1}(x,\partial\Omega),\quad x\in\Omega.
\end{equation}
\end{theorem}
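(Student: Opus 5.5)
We work with the auxiliary quantity $z:=u^{\alpha}w$, where $\alpha=-2\sigma$, $v=\ln u$ (that is, $f(s)=-e^s$) and $w=|\nabla v|^2$. Then $z=u^{-2\sigma}|\nabla u|^2u^{-2}\cdot u^{2}\,u^{-2}$; more directly, $|\nabla u^{-\sigma}|^2=\sigma^2u^{-2\sigma}w$. So it suffices to prove $u^{-2\sigma}w\le C m^{-2\sigma}R^{-2}$ at $x_0$, where $R={\rm dist}(x_0,\partial\Omega)$. Because $u\ge m$ and $\alpha<0$, we have $m^{-\alpha}u^{\alpha}\le 1$, and this is the normalization used below.

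\textbf{Step 1: a differential inequality for $z$ where $\nabla u\neq0$.} Write $\Delta v=-u^{p+q-1}w^{q/2}-w$. Bochner's formula gives
\[
\tfrac12\Delta w=|D^2v|^2+\nabla v\cdot\nabla\Delta v .
\]
Expanding $\nabla v\cdot\nabla(u^{p+q-1}w^{q/2})$ produces a drift term $\tfrac q2u^{p+q-1}w^{q/2-1}\nabla v\cdot\nabla w$, which goes into $\mathcal H$, plus a zero-order term $(p+q-1)u^{p+q-1}w^{(q+2)/2}$. Next we compute $\mathcal L_\alpha(u^\alpha w)$ with the drift $\mathcal H_\alpha$ of the introduction. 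The product rule contributes two terms:
\[
-w\Delta u^{\alpha}=-\alpha u^\alpha w\bigl(\alpha w+\Delta v\bigr)=-\alpha u^\alpha w\bigl((\alpha-1)w-u^{p+q-1}w^{q/2}\bigr),
\]
and the cross term $-2\nabla u^\alpha\cdot\nabla w$, which we absorb into the drift as in (i). To bound $|D^2v|^2$ from below we split $\nabla w$ into its radial part along $\nabla v$ and its tangential part, and use the refined inequality
\[
|D^2v|^2\ge\frac{(\Delta v)^2}{N}+\frac{N}{N-1}\Bigl(\frac{\nabla v\cdot\nabla w}{2w}-\frac{\Delta v}{N}\Bigr)^2 .
\]
The radial derivative is then re-expressed through $\nabla z=u^\alpha(\alpha w\nabla v+\nabla w)$, so that it becomes a drift term plus terms in $w$ and $u^{p+q-1}w^{q/2}$. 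The target is
\[
\mathcal L_\alpha z\le -c\,u^{\alpha}\bigl(w^2+u^{2(p+q-1)}w^{q}\bigr)+(\text{drift}),
\]
with $c>0$. The zero-order part is a quadratic form in $X=w$ and $Y=u^{p+q-1}w^{q/2}$, whose coefficients depend on $N,p,q,\alpha$. Requiring this form to be negative definite is, I expect, exactly condition \eqref{Q(p,q)<sigma}, and the discriminant constraint should produce $\sigma<(\sqrt{(N+8)/N}-1)/4$. \textbf{This algebra is the main obstacle}: one must choose the splitting weight in the Cauchy--Schwarz step optimally to reach the sharp range.

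\textbf{Step 2: localization.} Take a cutoff $\eta$ supported in $B_{3R/4}(x_0)$ with $\eta=1$ on $B_{R/2}$, $|\nabla\eta|\le CR^{-1}\eta^{1/2}$ and $|\Delta\eta|\le CR^{-2}$. Set $Z=m^{-\alpha}z\eta$. Computing $\mathcal L_\alpha Z$ produces error terms $\nabla z\cdot\nabla\eta$, $z\Delta\eta$ and $z\,\mathcal H_\alpha\cdot\nabla\eta$. The last one contains $|\nabla v|\,(u^{p+q-1}w^{(q-1)/2}+w^{1/2})$, and we bound it by Young's inequality against the good terms $u^\alpha w^2$ and $u^{\alpha+2(p+q-1)}w^q$. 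Because $m^{-\alpha}u^{\alpha}\le1$, the leftover constants are controlled by $Z$ itself, and we obtain $\mathcal L_\alpha Z<0$ on $\{Z>CR^{-2}\}\cap\{\nabla u\ne0\}$.

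\textbf{Step 3: maximum principle.} Let $U=\{x\in B_{3R/4};\,Z>CR^{-2}\}$. On $U$ we have $w>0$, so the singular set $\Omega_0$ is avoided, $u$ is smooth there by elliptic regularity, and $\mathcal H_\alpha$ is locally bounded. Since $Z=CR^{-2}$ on $\partial U$, the maximum principle applied to the strict subsolution gives $U=\emptyset$. Hence $Z\le CR^{-2}$, and at $x_0$ this reads $u^{-2\sigma}w\le Cm^{-2\sigma}R^{-2}$, which is \eqref{|nabla-u|<m^-1}.
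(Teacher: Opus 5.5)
Your Steps 2 and 3 match the paper's argument. Since $\alpha=-2\sigma<0$ and $u\ge m$, one has $m^{-\alpha}u^\alpha\le1$, so on $\{m^{-\alpha}u^\alpha w\eta>CR^{-2}\}$ also $w\eta>CR^{-2}$. This makes the cut-off errors subordinate and keeps $\mathcal H_\alpha$ bounded there, and Lemma \ref{lem:maxi-pric} finishes.

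\textbf{The gap is Step 1.} It is the entire content of the theorem, and you leave it as an expectation. The mechanism you anticipate is also not the right one. The quantities $X=w$ and $Y=u^{p+q-1}w^{q/2}$ are nonnegative, so you need negativity of the form on the positive quadrant, not negative definiteness. Negative definiteness is a two-sided bound on the $XY$-coefficient. It fails, for instance, as $p\to-\infty$ with $q$ fixed, although such $(p,q)$ satisfy \eqref{Q(p,q)<sigma}. Likewise, the bound on $\sigma$ is not a discriminant condition, and no optimized splitting is needed.

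\textbf{What closes Step 1} (this is the paper's route, via Lemma \ref{lem:L(uw)}). For $f(s)=-e^s$ one has $f'/f=1$. In $\mathcal H_\alpha\cdot\nabla(u^\alpha w)$, the piece $2\alpha u^\alpha\nabla v\cdot\nabla w$ cancels the cross term $-2\nabla u^\alpha\cdot\nabla w$ exactly. Collecting the remaining zero-order terms gives the identity
\[
u^{-\alpha}\mathcal L_\alpha(u^\alpha w)=-2|D^2v|^2+\bigl[2(p+q-1)-\alpha(q-1)\bigr]wY+\alpha(\alpha-1)w^2 .
\]
The plain bound $|D^2v|^2\ge(\Delta v)^2/N=(w+Y)^2/N$, with no radial splitting, then gives
\[
u^{-\alpha}\mathcal L_\alpha(u^\alpha w)\le a\,w^2+2b\,wY-\frac2N Y^2,\qquad a=\alpha(\alpha-1)-\frac2N,\quad b=p+\Bigl(1-\frac\alpha2\Bigr)q-\frac{N+2}N+\frac\alpha2 .
\]
With $\alpha=-2\sigma$:
\begin{itemize}
\item $b<0$ is exactly \eqref{Q(p,q)<sigma};
\item $a<0$ is exactly $4\sigma^2+2\sigma<2/N$, i.e.\ $\sigma<(\sqrt{(N+8)/N}-1)/4$.
\end{itemize}
Since $w,Y\ge0$, this yields $\mathcal L_\alpha(u^\alpha w)\le a\,u^\alpha w^2<0$.

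After localization, the term $2\langle\nabla w,\nabla\eta\rangle$ is absorbed by $\varepsilon|D^2v|^2\eta+C(\varepsilon)w\eta^{-1}|\nabla\eta|^2$, using $|\nabla w|\le2|D^2v|w^{1/2}$. This replaces $2/N$ by $(2-\varepsilon)/N$, and the strict inequalities leave room for that.

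Your refined inequality can also be made to work: drop the square of the $\nabla z$-part and send the cross term into the drift, which only adds a nonpositive zero-order term. But it is unnecessary, and it changes all three coefficients (for example, the $Y^2$-coefficient becomes $-2/(N-1)$). So it would not reproduce \eqref{Q(p,q)<sigma} ``exactly'' in any case.
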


The gradient estimates in range \eqref{Q(p,q)<sigma} relies on the differential inequality for $u^\alpha w$ with $\alpha=-2\sigma<0$. Upon choosing a proper auxiliary function $f$, the negative perturbation term $-w\Delta u^\alpha$ allows us to obtain the differential inequality in a wider range. Indeed, it holds
\begin{equation*}
  u^{-\alpha}\mathcal L_\alpha(u^\alpha w)\leq \left(p+\big(1-\frac\alpha2\big)q-\frac{N+2}N+\frac{\alpha}2\right)u^{p+q-1}w^{\frac{q+2}2}-C(N,\alpha)w^2,
\end{equation*}
with $C(N,\alpha)>0$, where $\mathcal L_\alpha:=-\Delta-\mathcal H_\alpha\cdot\nabla $ and $\mathcal H_\alpha:=\mathcal H(\alpha,v,\nabla v,f)$. We remark that condition \eqref{Q(p,q)<sigma}  together with the  local lower bound hypothesis on  $u$ implies that
\begin{equation*}
  \mathcal L_\alpha(u^\alpha w)\leq -C(N,\alpha)u^\alpha w^2\leq -C(N,\alpha)m^{-\alpha} (u^{\alpha}w)^2.
\end{equation*}
Replacing $\mathcal L_\alpha(u^\alpha w)$ with $\mathcal L_\alpha(u^\alpha w\eta)$, we verify that the previous arguments are valid for the cut-off function $\eta$, so the inequality can be expressed as
\begin{equation*}
  \mathcal L_\alpha(u^\alpha w\eta)<0\quad \text{in} \ \big\{x\in B_{3R/4};\,(u^\alpha w\eta)(x)>Cm^{\alpha}R^{-2}\big\}.
\end{equation*}
The estimates for $u^\alpha w$ now follows from the maximum principle.

As observed in Serrin and Zou \cite[Lemma 2.3]{Serrin-Zhou-Acta}, the lower bound estimates
\begin{equation*}
  u(x)\geq C|x|^{-(N-2)},\quad |x|>1,
\end{equation*}
for positive weak super-harmonic functions are proved, where $C$ depends on $N$ and $\min_{|x|=2}u$. Based upon the asymptotic estimates at infinity, we then establish our Liouville-type theorem.

\begin{theorem}\label{corol:<p+q<}
Assume that $N$, $p$, $q$, and $\sigma$ satisfy
\begin{equation*}
 p+\left(1+\sigma\right)q<\frac{N+2}N+\sigma
\end{equation*}
with $0<\sigma<\big(\sqrt{(N+8)/N}-1\big)/4$. Then \eqref{eq1} possesses no nontrivial positive solution in $\mathbb R^N$.
\end{theorem}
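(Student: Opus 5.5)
Let $u$ be a positive solution of \eqref{eq1} in $\mathbb R^N$ with $p,q,\sigma$ as in the statement. The plan is to combine Theorem \ref{them:p+q>=(N+3)/}, applied on large balls, with the Serrin--Zou lower bound at infinity. The difficulty is that $u$ has no global positive lower bound, so the constant $m$ in \eqref{|nabla-u|<m^-1} must be allowed to depend on the ball. The decay of $m$ then has to be beaten by the $R^{-1}$ gain coming from the distance to the boundary.

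\textbf{Step 1 (superharmonicity and decay).} Since $u>0$ and $-\Delta u=u^p|\nabla u|^q\ge 0$ in the weak sense, $u$ is a positive weak superharmonic function on $\mathbb R^N$. If $N=2$, positive superharmonic functions on $\mathbb R^2$ are constant. A constant $u$ forces $u^p|\nabla u|^q\equiv 0$, i.e.\ $\Delta u=0$, which is allowed only when $q>0$. If $q<0$, a constant is incompatible with $u^p|\nabla u|^q\in L^1_{loc}$, exactly as in the remark after Corollary \ref{corol:p+q<}. Either way $u$ is not a genuinely nontrivial (nonconstant) solution. If $N\ge 3$, \cite[Lemma 2.3]{Serrin-Zhou-Acta} gives
\[
u(x)\ge c_0|x|^{-(N-2)}\quad (|x|>1),\qquad c_0=c_0\bigl(N,\min_{|x|=2}u\bigr)>0 .
\]
Together with positivity and continuity on $\overline{B_1}$, this yields $u\ge m_R:=c\,R^{-(N-2)}$ on $B_{2R}(0)$ for all $R\ge 1$.

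\textbf{Step 2 (apply the local estimate).} Fix $x$ and take $R\ge \max\{1,|x|\}$. Apply Theorem \ref{them:p+q>=(N+3)/} on $\Omega=B_{2R}(0)$ with $m=m_R$. Since ${\rm dist}(x,\partial B_{2R})\ge R$, we get
\[
|\nabla u^{-\sigma}(x)|\le C m_R^{-\sigma}R^{-1}=C' R^{\sigma(N-2)-1}.
\]

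\textbf{Step 3 (the exponent, the main obstacle).} To let $R\to\infty$ we need $\sigma(N-2)<1$. We have $\sigma<(\sqrt{(N+8)/N}-1)/4\le (\sqrt 5-1)/4<1/3$, so $\sigma(N-2)<1$ for $N\le 4$. For larger $N$, the inequality $\sqrt{1+8/N}-1\le 4/N$ gives $\sigma<1/N$, hence $\sigma(N-2)<(N-2)/N<1$ for every $N$. Thus the right-hand side tends to $0$, so $\nabla u^{-\sigma}(x)=0$ for every $x$ and $u$ is constant. The constant case is then excluded as in Step 1: for $q<0$ by local integrability, and for $q\ge 0$ a positive constant is only the trivial case.

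The step to check carefully is whether Theorem \ref{them:p+q>=(N+3)/} really allows the lower bound $m$ to hold only on the domain $\Omega=B_{2R}$. Its hypothesis is stated that way, so this should be fine. The quantitative point is the interplay $m_R^{-\sigma}R^{-1}\to 0$, which rests precisely on the smallness $\sigma<1/N$ encoded in the restriction on $\sigma$.
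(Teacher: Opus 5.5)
Your proposal is correct and follows the paper's proof: the Serrin--Zou lower bound $u\ge cR^{-(N-2)}$ on a ball of radius comparable to $R$, then Theorem~\ref{them:p+q>=(N+3)/} with $m=m_R$ to get $|\nabla u^{-\sigma}(x_0)|\le CR^{(N-2)\sigma-1}$, then $R\to\infty$ using $(N-2)\sigma<1$, which you verify explicitly via $\sqrt{1+8/N}-1\le 4/N$ where the paper only asserts it. The only deviation is for $N=2$: you invoke the planar Liouville theorem for positive superharmonic functions directly, instead of the paper's bound $u\ge \inf_{\mathbb R^2}u>0$ fed into the gradient estimate, and this is a legitimate shortcut.
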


The following result was established in \cite[Theorem 1.5]{Bidaut-Veron-Adv.N-2021} by means of the transformation $u=v^b$ for $b\geq1$ and a Keller--Osserman type inequality. For the sake of completeness, we present an alternative proof, whose techniques will be adopted throughout this paper.

\begin{theorem}\label{them:p<=0}
Let $u$ be a positive solution of \eqref{eq1} in $\Omega$. Assume that $p\leq 0$ and
\begin{equation*}
  p+q\geq \frac{N+3}{N-1}.
\end{equation*}
Then there exists a constant $C=C(N,p,q)>0$ such that
\begin{equation}\label{estm-nabla u}
  \left|\nabla u^{\frac{p+q-1}{q-1}}(x)\right|\leq C {\rm dist}^{-\frac{1}{q-1}}(x,\partial \Omega),\quad x\in\Omega.
\end{equation}
If $\Omega=\mathbb R^N$, then $u$ is a constant.
\end{theorem}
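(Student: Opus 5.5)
The plan is to remove the zero-order weight by a power change of unknown and then run a pointwise Bernstein argument for a Hamilton--Jacobi type equation. Since $p\le 0$ and $p+q\ge (N+3)/(N-1)>1$, we have $q>1$. Set $b=(q-1)/(p+q-1)\ge 1$ and $u=v^b$, i.e.\ $v=u^{(p+q-1)/(q-1)}$ (this is $v=f^{-1}(-u)$ with $f(s)=-s^b$). A direct computation gives
\begin{equation*}
-\Delta v=(b-1)\frac{|\nabla v|^2}{v}+b^{q-1}v^{\,b(p+q-1)-(q-1)}|\nabla v|^q=(b-1)\frac{|\nabla v|^2}{v}+c\,|\nabla v|^q ,
\end{equation*}
with $c=b^{q-1}>0$. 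The exponent of $v$ vanishes exactly by the choice of $b$. Both terms on the right are nonnegative, so estimate \eqref{estm-nabla u} is equivalent to $w:=|\nabla v|^2\le C\,{\rm dist}^{-2/(q-1)}(x,\partial\Omega)$.

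\textbf{Step 1: differential inequality for $w$.} By Bochner's identity, $-\tfrac12\Delta w=-|D^2v|^2+\nabla v\cdot\nabla(-\Delta v)$, and differentiating the equation gives
\begin{equation*}
-\tfrac12\Delta w=-|D^2v|^2+(b-1)\frac{\nabla v\cdot\nabla w}{v}-(b-1)\frac{w^2}{v^2}+\frac{cq}{2}w^{\frac{q-2}2}\nabla v\cdot\nabla w .
\end{equation*}
The gradient terms are placed into the drift of $\mathcal L:=-\Delta-\mathcal H\cdot\nabla$ with
\begin{equation*}
\mathcal H=\Big(2(b-1)v^{-1}+cq\,w^{\frac{q-2}2}\Big)\nabla v .
\end{equation*}
Then $|D^2v|^2\ge(\Delta v)^2/N$, and since both terms of $-\Delta v$ are nonnegative, $(\Delta v)^2\ge c^2w^q$. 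This yields
\begin{equation*}
\mathcal L w\le -\frac{2c^2}{N}w^q-2(b-1)\frac{w^2}{v^2}\quad\text{on }\{w>0\}.
\end{equation*}
The right-hand side is a superlinear absorption, since $q>1$.

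\textbf{Step 2: localization, the main obstacle.} Fix $x_0$, take $R={\rm dist}(x_0,\partial\Omega)$, and let $\eta$ be a cut-off supported in $B_{3R/4}(x_0)$ with $|\nabla\eta|\le CR^{-1}\eta^{1-1/(2q)}$ and $|D^2\eta|\le CR^{-2}\eta^{1-1/q}$. Compute $\mathcal L(w\eta)=\eta\mathcal Lw+w\mathcal L\eta-2\nabla w\cdot\nabla\eta$.
\begin{itemize}
\item The cross term $2\nabla w\cdot\nabla\eta$ is rewritten as $2\nabla(w\eta)\cdot\nabla\eta/\eta-2w|\nabla\eta|^2/\eta$, and its first part is absorbed into a modified drift.
\item The dangerous term is $w\,\mathcal H\cdot\nabla\eta$, because $\mathcal H$ contains $v^{-1}$, which is not controlled.
\end{itemize}
I expect this drift term to be the main difficulty. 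It is handled with the good term $-(b-1)w^2/v^2$ by Young's inequality:
\begin{equation*}
2(b-1)\frac{w^{3/2}|\nabla\eta|}{v}\le (b-1)\frac{w^2}{v^2}\eta+(b-1)\,w\frac{|\nabla\eta|^2}{\eta}.
\end{equation*}
The remaining part $cq\,w^{(q+1)/2}|\nabla\eta|$ is absorbed by $\tfrac{c^2}{N}\eta w^q$ plus $C\,\eta^{-(q+1)/(q-1)}|\nabla\eta|^{2q/(q-1)}$, which is admissible for the chosen $\eta$. Collecting everything, all error terms are of the form $C R^{-2}w\,\eta^{1-1/q}$ or $C R^{-2q/(q-1)}$. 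Young's inequality against $\tfrac{c^2}{N}\eta w^q$ then gives
\begin{equation*}
\widetilde{\mathcal L}(w\eta)<0\quad\text{in}\quad\Omega':=\big\{x\in B_{3R/4};\,(w\eta)(x)>C R^{-2/(q-1)}\big\}.
\end{equation*}

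\textbf{Step 3: conclusion.} The function $w\eta$ vanishes near $\partial B_{3R/4}$, so on $\partial\Omega'$ it equals the threshold $CR^{-2/(q-1)}$. Since $\widetilde{\mathcal L}(w\eta)<0$ in $\Omega'$, a positive interior maximum of $w\eta$ there is impossible by the maximum principle, so $\Omega'=\emptyset$. Evaluating at $x_0$, where $\eta=1$, gives $|\nabla v(x_0)|\le CR^{-1/(q-1)}$, which is \eqref{estm-nabla u}. If $\Omega=\mathbb R^N$, letting the distance tend to infinity gives $\nabla v\equiv0$, so $u$ is constant.
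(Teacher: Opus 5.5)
Your argument is essentially the paper's proof. It uses the same substitution $u=v^b$ with $b=\frac{q-1}{p+q-1}$ (the paper's $f(s)=-s^b$), the same inequality for $\mathcal L w$ from $|D^2v|^2\ge(\Delta v)^2/N$, and the same Young absorption of $2(b-1)w^{3/2}v^{-1}|\nabla\eta|$ by the good term $(b-1)w^2v^{-2}\eta$. It also concludes with the same maximum principle on $\{w\eta>CR^{-2/(q-1)}\}$. The differences are cosmetic. You move $2\eta^{-1}\nabla\eta$ into the drift, whereas the paper absorbs $2\langle\nabla w,\nabla\eta\rangle$ into $\varepsilon|D^2v|^2\eta$ via $|\nabla w|\le 2|D^2v|w^{1/2}$. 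Your interior-maximum argument also avoids the bounded-drift hypothesis of Lemma~\ref{lem:maxi-pric}.

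One bookkeeping step needs correcting: your cut-off exponents $1-\frac1{2q}$ and $1-\frac1q$ are too small when $q$ is close to $1$. The hypotheses allow this, since $\frac{N+3}{N-1}\to1$.
\begin{itemize}
\item For instance, $\eta^{-(q+1)/(q-1)}|\nabla\eta|^{2q/(q-1)}\le CR^{-2q/(q-1)}\eta^{(q-2)/(q-1)}$ is unbounded for $q<2$, so it is not of the form $CR^{-2q/(q-1)}$ you claim.
\item More decisively, comparing the error terms with $\eta w^q=z^q\eta^{1-q}$, where $z=w\eta$, requires $q^2-q-1\ge0$. This fails for some admissible $(p,q)$ once $N\ge8$ (e.g.\ $N=8$, $p=0$, $q=1.6$).
\end{itemize}
Instead, use $|\nabla\eta|\le CR^{-1}\eta^a$ and $|D^2\eta|+\eta^{-1}|\nabla\eta|^2\le CR^{-2}\eta^a$ with $a=\max\{(3-q)/2,0\}$, as the paper does (take $\eta=\rho^k$ with $k$ large). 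Then every error term is bounded by $C\big(R^{-2}z^{1-q}+R^{-1}z^{(1-q)/2}\big)z^q\eta^{1-q}$, and the threshold $z>CR^{-2/(q-1)}$ works for every $q>1$.
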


Turning to the second principal goal of this paper, we derive local pointwise estimates for positive solution to \eqref{eq1}.
Let $\mathbb R^2_+=(-\infty,\infty)\times[0,\infty)$. We define the index region
\begin{align*}
 \mathcal{R}_L= \left\{(p, q) \in \mathbb{R}_+^2 \middle|
 \begin{array}{l}
\text{There exists no nontrivial positive } \\
\text{solution
  of \eqref{eq1} in $\mathbb R^N$ with indices}\ (p,q).
\end{array} \right\},
\end{align*}
for which the Liouville-type theorem for equation \eqref{eq1} is valid. We next present the following universal priori estimates.

\begin{theorem}\label{them:uni-u}
Let $\Omega$ be an arbitrary domain of $\mathbb R^N$. Assume that $u\in C^2(\Omega)$ is a positive solution of \eqref{eq1} in $\Omega$ with $(p,q)\in\mathcal R_L$, and
\begin{equation}\label{beta/p>0}
  \frac{p+q-1}{2p+q}>0.
\end{equation}
Then there exists $C=C(N,p,q)>0$ such that
\begin{equation}\label{uniform-u}
(u^p|\nabla u|^{q})^{\frac{p+q-1}{2p+q}}(x)\leq C{\rm dist}^{-1}(x,\partial\Omega),\quad x\in\Omega.
\end{equation}
\end{theorem}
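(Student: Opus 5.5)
The plan is a blow-up (rescaling) argument by contradiction. Instead of the doubling lemma, I would use an auxiliary function built from the blow-up quantity $M(u)=(u^p|\nabla u|^q)^{\beta}$ with $\beta=(p+q-1)/(2p+q)>0$, following the approach of Chen, Li, Wu and Xin.

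\textbf{Scaling.} First I check that $M$ scales like an inverse length. If $u$ solves \eqref{eq1}, then so does
\[
u_\lambda(y)=\lambda^{a}u(\lambda y),\qquad a=\frac{2-q}{p+q-1}.
\]
A direct computation gives $u_\lambda^p|\nabla u_\lambda|^q(y)=\lambda^{ap+(a+1)q}\,(u^p|\nabla u|^q)(\lambda y)$. We have $ap+(a+1)q=(2p+q)/(p+q-1)$, so $M(u_\lambda)(y)=\lambda M(u)(\lambda y)$. Thus \eqref{uniform-u} is scale invariant, and it suffices to argue on balls.

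\textbf{Contradiction setup.} Suppose \eqref{uniform-u} fails. Then there are domains $\Omega_k$, solutions $u_k$ and points $y_k$ with
\[
M(u_k)(y_k)\,{\rm dist}(y_k,\partial\Omega_k)>2k.
\]
Translate and rescale so that $y_k=0$ and $\Omega_k\supset B_1$. Consider the auxiliary function
\[
\Phi_k(x)=M(u_k)(x)\,(1-|x|),\qquad x\in B_1 .
\]
It vanishes on $\partial B_1$, so it attains its maximum at some $x_k$, and $\Phi_k(x_k)\ge\Phi_k(0)\ge 2k$. Set $\lambda_k=M(u_k)(x_k)^{-1}$ and $d_k=(1-|x_k|)/2$, so that $d_k/\lambda_k\ge k$. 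For $x\in B_{d_k}(x_k)$ we have $1-|x|\ge d_k=(1-|x_k|)/2$. Maximality of $\Phi_k$ then gives
\[
M(u_k)(x)\le 2M(u_k)(x_k)\quad\text{on } B_{d_k}(x_k).
\]
Now rescale:
\[
v_k(y)=\lambda_k^{a}u_k(x_k+\lambda_k y),\qquad |y|<k.
\]
Each $v_k$ solves \eqref{eq1}, with $M(v_k)(0)=1$ and $M(v_k)\le 2$ on $B_k$. This normalization is exactly what the function $\Phi_k$ provides in place of the doubling lemma.

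\textbf{Compactness (main obstacle).} The bound $M(v_k)\le2$ controls only the product $v_k^p|\nabla v_k|^q$, not $v_k$ itself. The equation also admits arbitrary positive constants, so I must normalize the amplitude. Write $v_k=c_k\tilde v_k$ with $c_k=v_k(0)$. Then $\tilde v_k(0)=1$ and
\[
-\Delta\tilde v_k=c_k^{p+q-1}\tilde v_k^{\,p}|\nabla\tilde v_k|^q .
\]
Here I would invoke a Lu--Zhu type logarithmic gradient bound. Theorem \ref{them:p+q<(N+3)/} covers only part of the range, so in general I would prove a Harnack bound of the form $|\nabla\ln v|\le C$ on $B_{k/2}$ for solutions satisfying $M\le 2$. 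This should follow because $-\Delta v\ge0$ with bounded right-hand side forces $v$ to be comparable to $v(0)$ on compact sets, via the weak Harnack inequality for superharmonic functions together with the local Harnack inequality.

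Granting this, the $\tilde v_k$ are locally bounded above and below. I then split into cases according to $c_k$.

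\emph{Case $c_k$ bounded above and below.} Then $v_k$ itself is locally bounded, $|\nabla v_k|$ is bounded (since $q\ge0$), and so $\Delta v_k$ is bounded. Elliptic $C^{1,\alpha}$ estimates and Arzel\`a--Ascoli give a subsequence converging in $C^1_{loc}$ to a positive solution $v$ on $\mathbb R^N$ with $M(v)(0)=1$. Then $v$ is nonconstant, contradicting $(p,q)\in\mathcal R_L$.

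\emph{Degenerate cases $c_k\to0$ or $c_k\to\infty$.} The right-hand side of the normalized equation then tends to $0$ or blows up, and I would show either of these is incompatible with $M(v_k)(0)=1$. If the $\tilde v_k$ converge to a positive harmonic function on $\mathbb R^N$, that function is constant, so $|\nabla\tilde v_k(0)|\to0$. I would then get a quantitative contradiction from $c_k^{p+q}|\nabla\tilde v_k(0)|^q=1$ combined with the scaling identity and the sign condition \eqref{beta/p>0}. I expect this nondegeneracy step, showing the limit is neither constant nor trivial, to be the most delicate part. If needed, I would rechoose $\lambda_k$ to absorb $c_k$.
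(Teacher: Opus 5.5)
\paragraph{There is a genuine gap: the amplitude step is left open, and for $p<0$ it cannot be closed.}
Your construction ($\Phi_k=M(u_k)(1-|x|)$, $\lambda_k=M(u_k)(x_k)^{-1}$, and $v_k$ with $M(v_k)(0)=1$, $M(v_k)\le 2$ on $B_k$) is exactly the paper's. The paper then asserts that $v_k$ is bounded in $C^{2,s}_{\rm loc}$ because $v_k^p|\nabla v_k|^q\le 2^{1/\beta}$. That only bounds $\Delta v_k$, so you are right that the amplitude $c_k=v_k(0)$ is the crux. But your proposal does not resolve it, and the tools you suggest do not work:
\begin{itemize}
\item The Harnack inequality for $v>0$, $0\le -\Delta v\le K$ only gives $\sup_{B_r}v\le C(\inf_{B_r}v+Kr^2)$. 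So there is no two-sided comparability with $v(0)$ when $c_k\to0$, and no bound $|\nabla \ln v_k|\le C$ follows. Theorem~\ref{them:p+q<(N+3)/} gives such a bound only for $p+q<(N+3)/(N-1)$.
\item In the degenerate case, a constant limit is perfectly consistent with $c_k^{p+q}|\nabla\tilde v_k(0)|^q=1$. For example, $p>0$, $c_k\to\infty$ just forces $|\nabla\tilde v_k(0)|=c_k^{-(p+q)/q}\to0$. Neither scaling nor \eqref{beta/p>0} limits that rate.
\item You cannot ``rechoose $\lambda_k$'': the only symmetry is the one-parameter scaling, already used to impose $M(v_k)(0)=1$.
\item The degenerate case is not marginal. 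Wherever Theorem~\ref{them:p+q<(N+3)/} applies with $q>0$ and $p+q>0$, the bound $|\nabla v_k(0)|\le Cc_k/k$ gives $c_k^{p+q}\ge (k/C)^q\to\infty$. So the nondegenerate case that you and the paper treat never occurs there.
\end{itemize}

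\paragraph{Counterexample for $p<0$.}
Take $(p,q)=(-\tfrac14,\tfrac32)$. Then $\beta=\tfrac14$, and $(p,q)\in\mathcal R_L$ by Theorem~\ref{them:p+q<(N+3)/} or Theorem~\ref{them:p<=0}, depending on $N$. Let $U$ solve $-U''=U^{-1/4}|U'|^{3/2}$ with $U(0)=c$ and $U'(0)=L:=\epsilon^2c^{1/2}$. The energy identity is $\frac{d}{dt}\big(2|U'|^{1/2}\big)=-U^{-1/4}U'$. A bootstrap on $[-1,1]$, for fixed small $\epsilon$ and large $c$, gives $|(U')^{1/2}L^{-1/2}-1|\le 2^{1/4}\epsilon$ and $|U-c|\le 2L<c/2$. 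Hence $u(x)=U(x_1)$ is a positive solution in $B_1$ with
\[
(u^p|\nabla u|^q)^{1/4}(0)=(\epsilon^3c^{1/2})^{1/4}\to\infty \quad (c\to\infty),
\]
which contradicts \eqref{uniform-u} with $\Omega=B_1$. The same obstruction defeats the paper's compactness claim.

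\paragraph{What would be needed for $p>0$.}
Here the step can be supplied, but it needs an ingredient not contained in $(p,q)\in\mathcal R_L$.
\begin{itemize}
\item $c_k\to0$ is excluded: Harnack bounds $v_k$ locally, hence bounds $|\nabla v_k(0)|$, while $|\nabla v_k(0)|=c_k^{-p/q}\to\infty$.
\item For $c_k\to\infty$, first show $v_k/c_k\to1$ locally uniformly. Then blow up again at the Hamilton--Jacobi scale, $\chi_k(z)=c_k^{2p/q}(v_k-c_k)(c_k^{-p/q}z)$. This satisfies $-\Delta\chi_k=(v_k/c_k)^p|\nabla\chi_k|^q$, $|\nabla\chi_k|\le C$ and $|\nabla\chi_k(0)|=1$.
\item The limit is an entire solution of $-\Delta\chi=|\nabla\chi|^q$ with bounded nonzero gradient. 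For $q>1$ this is ruled out by Lions' estimate. For $q\le1$ it is ruled out by the flux inequality $\int_{B_R}|\nabla\chi|^q\le C\int_{\partial B_R}|\nabla\chi|^q$, which forces exponential growth against the polynomial bound $\int_{B_R}|\nabla\chi|^q\le C|B_R|$.
\end{itemize}
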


\begin{remark}
{\rm{(i)}} Theorem \ref{them:uni-u} covers the estimates in terms of  Lane-Emden equation \eqref{Lane-Emden} and Hamilton-Jacobi equation \eqref{Ham-Joc}. Indeed, when $q=0$, in view of the Liouville-type results established in \cite{Gidas-Spruck-1981-CPAM,Wang-Wei-JDE-2023} for $p<p_S:=(N+2)/(N-2)$ with $N>2$,  the estimates \eqref{uniform-u} reduces to
\begin{equation}\label{u<cd^-1}
  u^{\frac{p-1}2}(x)\leq C{\rm dist}^{-1}(x,\partial\Omega),\quad x\in\Omega
\end{equation}
with $1<p<p_S$ or $p<0$. For $1<p<p_S$, \eqref{u<cd^-1} is the well-known universal boundedness result established by Dancer \cite[Lemma 1]{Dancer-MathZ-1998}. However, when $p<0$, \eqref{uniform-u} yields a lower bound estimates
\begin{equation*}
  u(x)\geq  C{\rm dist}^{-\frac{2}{p-1}}(x,\partial\Omega),\quad x\in\Omega.
\end{equation*}
For $p=0$, there holds
\begin{equation*}
  |\nabla u(x)|\leq C{\rm dist}^{-\frac1{q-1}}(x,\partial\Omega),\quad x\in\Omega
\end{equation*}
when $q>1$, which is proved by Lions \cite[Theorem IV.1]{Lions-1985-JAM}.

{\rm (ii)} Based on the definition of $\mathcal R_L$,  estimates \eqref{uniform-u} is also valid for indices considered in \cite{Bidaut-Veron-Adv.N-2021,Bidaut-Veron-Duke-2019,Filippucci-Pucci-Souplet-Adv.stu-2020,Lu-arXive-2026,Ma-Wu-Bull-2026} under assumption \eqref{beta/p>0}, where Liouville-type theorems have been established for $p,q\geq 0$.
\end{remark}

This paper is organized as follows. In Section \ref{sect:diff-ineq}, we establish  differential inequalities involving the solution and its gradient of equation \eqref{eq1}, and  then present the maximum principle. In Section \ref{sect:gradient-est}, by choosing different auxiliary functions, we provide pointwise gradient estimates as well as the corresponding Liouville-type theorems.  In Section \ref{sect:univer-esti}, we combine the Liouville-type theorems with the rescaling method to derive universal estimates.

\section{Elliptic differential operators and inequalities}\label{sect:diff-ineq}

We begin by introducing some additional notations. Let $a\in (0,1)$ be a constant to be chosen later, and fix $x_0\in\Omega$.  Define $R={\rm dist}(x_0,\partial\Omega)$ and $R'=3R/4$. We then choose a cut-off function $\eta\in C^\infty(\overline B_R)$, $0\leq \eta\leq 1$ such that $\eta=1$ in $|x-x_0|\leq R/2$, $\eta=0$ in $|x-x_0|\geq R'$, and such that
\begin{equation}\label{dfi-eta}
\left.\begin{array}{rl}
|\nabla \eta| & \leq C R^{-1} \eta^a \\
\left|D^2 \eta\right|+\eta^{-1}|\nabla \eta|^2 & \leq C R^{-2} \eta^a
\end{array}\right\}\quad \text { for }\left|x-x_0\right|<R^{\prime},
\end{equation}
where $C=C(a)>0$ and $B_R=B_R(x_0)$. Indeed, such a function is given in \cite{Souplet-Zhang} as $\eta=\rho^k$, where $\rho(x)=1-R'^{-2}|x-x_0|^2$, $x\in B_R$ and $k\geq2/(1-a)$.

For the remainder of this section,  we establish differential inequalities that contain undetermined auxiliary functions for solutions to \eqref{eq1}. These inequalities will be frequently  applied to derive the gradient estimates. Let $f$ denote a function to be specified later, such that
\begin{equation}\label{asum-f}
f\in C^3(\mathbb R)\quad \text{and}\quad f'\neq 0.
\end{equation}
We set
\begin{equation}\label{u=-f(v)}
  v=f^{-1}(-u),\quad w=|\nabla v|^2.
\end{equation}
By a direct computation, we see that $v$ satisfies the equation
\begin{equation}\label{eq-v}
  -\Delta v=-\frac{|f'|^{q}}{f'}(-f)^p w^{\frac q2}+\frac{f''}{f'} w.
\end{equation}
For convenience, we omit  the variable $v$ of $f$, $f'$ and $f''$ throughout. Elliptic regularity theory ensures that the solution $u$ is smooth outside the set $\Omega_0$ given in \eqref{levelset}, and hence we can differentiate the equation. For $w=|f'|^{-2}|\nabla u|^2>0$, we employ Bochner's identity
\begin{equation*}
  \Delta w=2\langle \nabla \Delta v,\nabla v\rangle+2|D^2 v|^2
\end{equation*}
to obtain
\begin{align}\label{eq-w}\nonumber
  &-\Delta w+q\frac{|f'|^q}{f'}(-f)^p w^{\frac{q-2}{2}}\langle\nabla v,\nabla w\rangle-2\frac{f''}{f'}\langle\nabla v,\nabla w\rangle\\
  =&\Big[-2(q-1)(-f)^p|f'|^{q-2}f''+2p(-f)^{p-1}|f'|^q\Big]w^{\frac{q+2}2}+2\left(\frac{f''}{f'}\right)'w^2-2|D^2v|^2,
\end{align}
where $|D^2 v|^2=\sum_{i,j=1}^N(v_{ij})^2$. We consider the operator $\mathcal L_\alpha$ defined as
\begin{equation}\label{def-L-alpha}
  \mathcal L_\alpha(z)=-\Delta z+\mathcal H_\alpha\cdot \nabla z,
\end{equation}
where
\begin{equation}\label{H_alpha}
  \mathcal H_\alpha:= \left(q\frac{|f'|^q}{f'}(-f)^p w^{\frac{q-2}2}-2\frac{f''}{f'}+2\alpha \frac{f'}f\right)\nabla v.
\end{equation}
Setting $\alpha =0$ in \eqref{def-L-alpha}, then \eqref{eq-w} can be written as
\begin{equation}\label{L(w)}
  \mathcal L(w):=\mathcal L_0(w)=-2|D^2 v|^2+\mathcal N(w),
\end{equation}
where
\begin{equation*}
  \mathcal N(w):=\Big[-2(q-1)(-f)^p|f'|^{q-2}f''+2p(-f)^{p-1}|f'|^q\Big]w^{\frac{q+2}2}+2\left(\frac{f''}{f'}\right)'w^2.
\end{equation*}

We next establish a general local differential inequality for $u^\alpha w^\gamma$ via a cut-off function.

\begin{lemma}\label{lem:L(u-alpah-w-gamma)}
  Assume that $u$ is a positive solution of \eqref{eq1} in $\Omega$. Then for any $\alpha\in \mathbb R$ and $\gamma \geq 1$,
  \begin{align}\label{L_a(u^aw^ga)}\nonumber
   &u^{1-\alpha}w^{1-\gamma}\mathcal L_\alpha(u^\alpha w^\gamma\eta)\\\nonumber
\leq &\gamma u\Big[-(\gamma-1)w^{-1}|\nabla w|^2-\Big(2(q-1)u^p|f'|^{q-2}f''-\left(2p-\gamma^{-1}\alpha (q-1)\right)u^{p-1}|f'|^q\Big)w^{\frac{q+2}2}\\
  &+2\left(\frac{f''}{f'}\right)'w^2-2|D^2v|^2\Big]\eta+\alpha \left((\alpha+1)\frac{(f')^2}{u}+2f''\right)w^2\eta-uw\Delta\eta\\\nonumber
  &+uw\left(|q|u^{p}|f'|^{q-1}w^{\frac{q-1}2}+2\left|\frac{f''}{f'}\right|w^{\frac12}\right)|\nabla\eta|-2\gamma u\langle\nabla w,\nabla \eta\rangle
  \end{align}
holds in $\{x\in B_{R'};\, |\nabla u(x)|>0\}$.
\end{lemma}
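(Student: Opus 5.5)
The plan is a direct product-rule expansion of $\mathcal L_\alpha(u^\alpha w^\gamma\eta)$. The only inequality in the argument comes from bounding the first-order terms in $\nabla\eta$ by absolute values. Throughout we work in $\{x\in B_{R'};\,|\nabla u(x)|>0\}$, where $u$, $v$, $w$ are smooth and \eqref{eq-w} holds. We use the relations $f(v)=-u$, $\nabla u=-f'\nabla v$ and $|\nabla v|=w^{1/2}$.

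First split the drift as $\mathcal H_\alpha=\mathcal H_0+2\alpha\frac{f'}{f}\nabla v$, so that $\mathcal L_\alpha=\mathcal L_0+2\alpha\frac{f'}{f}\nabla v\cdot\nabla$. Write $z=u^\alpha w^\gamma\eta$ and expand $-\Delta z$ by the Leibniz rule. This produces three groups of terms:
\begin{enumerate}
\item the diagonal terms $w^\gamma\eta(-\Delta u^\alpha)$, $u^\alpha\eta(-\Delta w^\gamma)$ and $-u^\alpha w^\gamma\Delta\eta$;
\item the cross terms $-2\nabla u^\alpha\cdot\nabla w^\gamma$, $-2\nabla u^\alpha\cdot\nabla\eta$ and $-2\nabla w^\gamma\cdot\nabla\eta$;
\item the drift applied to each factor.
\end{enumerate}

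The key observation is that the extra drift $2\alpha\frac{f'}{f}\nabla v$ is designed to cancel the cross terms involving $\nabla u^\alpha$. Indeed $\nabla u^\alpha=-\alpha u^{\alpha-1}f'\nabla v$ and $f'/f=-f'/u$. Hence $2\alpha\frac{f'}{f}u^\alpha\langle\nabla v,\nabla w^\gamma\rangle$ exactly cancels $-2\langle\nabla u^\alpha,\nabla w^\gamma\rangle$. In the same way $2\alpha\frac{f'}{f}u^\alpha w^\gamma\langle\nabla v,\nabla\eta\rangle$ cancels $-2w^\gamma\langle\nabla u^\alpha,\nabla\eta\rangle$.

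The part carried by $w^\gamma$ is then computed as follows. Since $\mathcal L_0(w^\gamma)=\gamma w^{\gamma-1}\mathcal L_0(w)-\gamma(\gamma-1)w^{\gamma-2}|\nabla w|^2$, we can insert \eqref{L(w)}, namely $\mathcal L_0(w)=-2|D^2v|^2+\mathcal N(w)$, with $(-f)^p=u^p$. After multiplying by $u^{1-\alpha}w^{1-\gamma}$, this gives the bracket $\gamma u[\cdots]\eta$, apart from the $\alpha$-contribution inside the $u^{p-1}$ coefficient.

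Next compute the part carried by $u^\alpha$, namely $\big(-\Delta u^\alpha+\mathcal H_0\cdot\nabla u^\alpha\big)+2\alpha\frac{f'}{f}\nabla v\cdot\nabla u^\alpha$. Use $-\Delta u^\alpha=-\alpha(\alpha-1)u^{\alpha-2}(f')^2w+\alpha u^{\alpha-1+p}|f'|^qw^{q/2}$, which follows from \eqref{eq1}. Also $\mathcal H_0\cdot\nabla u^\alpha=-\alpha q u^{\alpha+p-1}|f'|^qw^{q/2}+2\alpha u^{\alpha-1}f''w$, and the extra drift contributes $2\alpha^2u^{\alpha-2}(f')^2w$. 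Summing these and multiplying by $u^{1-\alpha}w^{1-\gamma}\cdot w^\gamma\eta$ gives
\[
\Big[\alpha(\alpha+1)\frac{(f')^2}{u}+2\alpha f''\Big]w^2\eta+\alpha(1-q)u^p|f'|^qw^{\frac{q+2}2}\eta .
\]
The last term is exactly the $\gamma^{-1}\alpha(q-1)$ correction inside the bracket, because it is multiplied by $\gamma u\cdot u^{p-1}$. The remaining $\eta$-terms are handled directly:
\begin{itemize}
\item $-u^\alpha w^\gamma\Delta\eta$ gives $-uw\Delta\eta$;
\item $-2u^\alpha\langle\nabla w^\gamma,\nabla\eta\rangle$ gives $-2\gamma u\langle\nabla w,\nabla\eta\rangle$;
\item $u^\alpha w^\gamma\,\mathcal H_0\cdot\nabla\eta$ is bounded by Cauchy--Schwarz with $|\nabla v|=w^{1/2}$ and $(-f)^p=u^p$, which yields the term with $|q|u^p|f'|^{q-1}w^{\frac{q-1}2}+2|f''/f'|w^{1/2}$.
\end{itemize}

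I expect the main obstacle to be bookkeeping rather than any conceptual step. One has to track the signs of $f'$ (via $|f'|^q/f'$ and $f'/f=-f'/u$) carefully enough that the two cancellations are exact and the coefficient of $u^{p-1}|f'|^qw^{\frac{q+2}2}$ comes out as $2p-\gamma^{-1}\alpha(q-1)$. I would verify this first for $\gamma=1$, $\eta\equiv1$ before adding the cut-off terms.
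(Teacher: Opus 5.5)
Your proposal is correct and takes the same approach as the paper: a Leibniz expansion of $-\Delta(u^\alpha w^\gamma\eta)$, in which the extra drift $2\alpha\frac{f'}{f}\nabla v$ exactly cancels the cross terms $-2\langle\nabla u^\alpha,\nabla w^\gamma\rangle\eta$ and $-2w^\gamma\langle\nabla u^\alpha,\nabla\eta\rangle$, and the only inequality is Cauchy--Schwarz on $uw\,\mathcal H_0\cdot\nabla\eta$. Splitting $\mathcal L_\alpha=\mathcal L_0+2\alpha\frac{f'}{f}\nabla v\cdot\nabla$ and grouping by factor is just tidier bookkeeping of the paper's computation, and your coefficients $\alpha(\alpha+1)\frac{(f')^2}{u}+2\alpha f''$ and $\alpha(1-q)u^p|f'|^q w^{\frac{q+2}{2}}$ check out.
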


\begin{proof}
From \eqref{L(w)}, we have
\begin{align}\label{Del-w-gam}\nonumber
  -\Delta(w^\gamma)=&-\gamma(\gamma-1)w^{\gamma-2}|\nabla w|^2-\gamma w^{\gamma-1}\Delta w\\
  =&-\gamma(\gamma-1)w^{\gamma-2}|\nabla w|^2-2\gamma w^{\gamma-1}|D^2 v|^2\\\nonumber
  &+\gamma w^{\gamma-1}\mathcal N(w)-\left(q\frac{|f'|^q}{f'}(-f)^p w^{\frac{q-2}{2}}-2\frac{f''}{f'}\right)\langle\nabla v,\nabla w^\gamma\rangle.
\end{align}
By a direct computation, it follows from \eqref{u=-f(v)} that
\begin{align}\label{Del-u-alp}\nonumber
  -\Delta(u^\alpha)=&-\alpha(\alpha-1)u^{\alpha-2}|\nabla u|^2-\alpha u^{\alpha-1}\Delta u\\
  =&-\alpha(\alpha-1)u^{\alpha-2}(f')^2w+\alpha u^{\alpha-1}(-f)^p|f'|^qw^{\frac{q}2}.
\end{align}
As for $u^\alpha w^\gamma \eta$, a calculation yields that
\begin{align}\label{Del-uweta}\nonumber
  &-\Delta\left(u^\alpha w^\gamma\eta\right)\\
  =&-u^\alpha \Delta\left(w^\gamma\right)\eta-w^\gamma\Delta\left(u^\alpha\right)\eta-u^\alpha w^\gamma\Delta\eta-2\langle\nabla u^\alpha,\nabla w^\gamma\rangle\eta-2\langle\nabla\left(u^\alpha w^\gamma\right),\nabla \eta\rangle.
\end{align}
Substituting \eqref{Del-w-gam} and \eqref{Del-u-alp} into \eqref{Del-uweta}, and recalling that $u=-f(v)$, we obtain
\begin{align}\label{Del-u-a-w-gam}\nonumber
  &-u^{1-\alpha} w^{1-\gamma}\Delta (u^\alpha w^\gamma\eta)\\\nonumber
  =&\gamma u\Big[-(\gamma-1)w^{-1}|\nabla w|^2-2|D^2v|^2-\Big(2(q-1)u^p|f'|^{q-2}f''-2pu^{p-1}|f'|^q\Big)w^{\frac{q+2}2}\\
  &+2\left(\frac{f''}{f'}\right)'w^2\Big]\eta+\alpha w\Big(-(\alpha-1)u^{-1}(f')^2w+u^p|f'|^q w^{\frac{q}2}\Big)\eta\\\nonumber
  &-uw^{1-\gamma}\eta\left(q\frac{|f'|^q}{f'}u^p w^{\frac{q-2}{2}}-2\frac{f''}{f'}\right)\langle\nabla w^\gamma,\nabla v\rangle-u w\Delta\eta\\\nonumber
  &-2u^{1-\alpha}w^{1-\gamma}\eta\langle\nabla u^{\alpha},\nabla w^{\gamma}\rangle-2 u^{1-\alpha}w^{1-\gamma}\langle\nabla(u^\alpha w^\gamma),\nabla\eta\rangle.
\end{align}

Next, due to the definition of $\mathcal H_\alpha$, we have
\begin{align}\label{H-alpha-expres}\nonumber
  &u^{1-\alpha}w^{1-\gamma}\mathcal H_\alpha\cdot \nabla (u^\alpha w^\gamma \eta)\\
  =& u^{1-\alpha}w^{1-\gamma}\left(q\frac{|f'|^q}{f'}(-f)^p w^{\frac{q-2}2}-2\frac{f''}{f'}+2\alpha \frac{f'}f\right)\nabla v\cdot\nabla(u^\alpha w^\gamma\eta)\\\nonumber
  =&\left(q\frac{|f'|^q}{f'}(-f)^p w^{\frac{q-2}2}-2\frac{f''}{f'}\right)\Big[u^{1-\alpha}w\eta\langle\nabla v,\nabla u^\alpha\rangle+w^{1-\gamma}u\eta\langle\nabla v,\nabla w^\gamma\rangle+uw\langle\nabla v,\nabla \eta\rangle\Big]\\\nonumber
  &+2\alpha \frac{f'}f u^{1-\alpha} w^{1-\gamma}\langle\nabla v,\nabla(u^\alpha w^\gamma\eta)\rangle.
\end{align}
Noting that
\begin{align*}
  \langle\nabla v,\nabla u^\alpha\rangle=\alpha u^{\alpha-1}\langle\nabla v,\nabla u\rangle=-\alpha u^{\alpha-1}f'w,
\end{align*}
we have
\begin{equation}\label{nabla_ualpha1}
  u^{1-\alpha}w\eta\left(q\frac{|f'|^q}{f'}u^p w^{\frac{q-2}{2}}-2\frac{f''}{f'}\right)\langle\nabla v,\nabla u^\alpha\rangle=-\alpha qu^p|f'|^qw^{\frac{q+2}2}\eta+2\alpha f''w^2\eta.
\end{equation}
To estimate $2\alpha f'f^{-1}\langle\nabla v,\nabla(u^\alpha w^\gamma\eta)\rangle$, we make a direct computation to obtain
\begin{align*}
2\langle\nabla u^\alpha,\nabla w^\gamma\rangle\eta=&2\alpha\frac{f'}{f}\langle\nabla v,u^\alpha\eta\nabla w^\gamma\rangle\\
=&2\alpha\frac{f'}f\big\langle\nabla v,\nabla(u^\alpha w^\gamma\eta)-w^\gamma\eta\nabla u^\alpha-u^\alpha w^\gamma\nabla\eta\big\rangle\\\nonumber
=&2\alpha\frac{f'}f\langle\nabla v,\nabla(u^\alpha w^\gamma\eta)\rangle+2\alpha^2\frac{(f')^2}{f}u^{\alpha-1}w^{\gamma+1}\eta-2\alpha\frac{f'}fu^\alpha w^\gamma\langle\nabla v,\nabla\eta\rangle,
\end{align*}
and then we get
\begin{align}\label{**2alp-ff}\nonumber
  &2\alpha\frac{f'}fu^{1-\alpha}w^{1-\gamma}\langle\nabla v,\nabla(u^\alpha w^\gamma\eta)\rangle\\
  =&2u^{1-\alpha}w^{1-\gamma}\langle\nabla u^\alpha,\nabla w^\gamma\rangle\eta-2\alpha^2\frac{(f')^2}{f}w^{2}\eta+2\alpha\frac{f'}fu w\langle\nabla v,\nabla\eta\rangle.
\end{align}
Substituting \eqref{nabla_ualpha1} and \eqref{**2alp-ff} into \eqref{H-alpha-expres}, it leads to
\begin{align}\label{uwH-alpha}\nonumber
 &u^{1-\alpha}w^{1-\gamma}\mathcal H_\alpha\cdot \nabla (u^\alpha w^\gamma \eta)\\
 =&-\alpha qu^p|f'|^qw^{\frac{q+2}2}\eta+2\alpha f''w^2\eta+2u^{1-\alpha}w^{1-\gamma}\langle\nabla u^\alpha,\nabla w^\gamma\rangle\eta-2\alpha^2\frac{(f')^2}{f}w^{2}\eta\\\nonumber
 &+2\alpha\frac{f'}fu w\langle\nabla v,\nabla\eta\rangle+\left(q\frac{|f'|^q}{f'}(-f)^p w^{\frac{q-2}2}-2\frac{f''}{f'}\right)\Big[w^{1-\gamma}u\eta\langle\nabla v,\nabla w^\gamma\rangle+uw\langle\nabla v,\nabla \eta\rangle\Big]
\end{align}

It follows from \eqref{def-L-alpha}, \eqref{Del-u-a-w-gam} and \eqref{uwH-alpha} that
\begin{align}\label{L-u-alpha-2}\nonumber
&u^{1-\alpha}w^{1-\gamma}\mathcal L_\alpha(u^\alpha w^\gamma\eta)\\\nonumber
=&\gamma u\Big[-(\gamma-1)w^{-1}|\nabla w|^2-\Big(2(q-1)u^p|f'|^{q-2}f''-\left(2p-\gamma^{-1}\alpha (q-1)\right)u^{p-1}|f'|^q\Big)w^{\frac{q+2}2}\\
  &+2\left(\frac{f''}{f'}\right)'w^2-2|D^2v|^2\Big]\eta+\alpha \left((\alpha+1)\frac{(f')^2}{u}+2f''\right)w^2\eta-uw\Delta\eta\\\nonumber
  &-2u^{1-\alpha}w^{1-\gamma}\langle\nabla(u^\alpha w^\gamma),\nabla\eta\rangle+2\alpha\frac{f'}fu w\langle\nabla v,\nabla\eta\rangle+uw\Big(q\frac{|f'|^q}{f'}u^pw^{\frac{q-2}2}-2\frac{f''}{f'}\Big)\langle\nabla v,\nabla\eta\rangle.
\end{align}
Moreover, it has
\begin{align}\label{nabla-v,nabla-eta}\nonumber
-2\langle\nabla(u^\alpha w^\gamma),\nabla\eta\rangle=&-2w^\gamma\langle\nabla u^\alpha,\nabla\eta\rangle-2u^\alpha \langle\nabla w^\gamma,\nabla\eta\rangle\\
=&2\alpha u^{\alpha-1}w^\gamma f'\langle\nabla v,\nabla\eta\rangle-2\gamma u^\alpha w^{\gamma-1}\langle\nabla w,\nabla\eta\rangle\\\nonumber
=&-2\alpha \frac{f'}{f}u^\alpha w^\gamma\langle\nabla v,\nabla\eta\rangle-2\gamma u^\alpha w^{\gamma-1}\langle\nabla w,\nabla\eta\rangle.
\end{align}
For those terms involving $\nabla \eta$, we apply Cauchy--Schwarz's inequality to obtain
\begin{equation}\label{nabla-eta-1}
\left(q\frac{|f'|^q}{f'}u^pw^{\frac{q-2}2}-2\frac{f''}{f'}\right)\langle\nabla v,\nabla\eta\rangle\leq \left(|q|u^{p}|f'|^{q-1}w^{\frac{q-1}2}+2\left|\frac{f''}{f'}\right|w^{\frac12}\right)|\nabla\eta|.
\end{equation}
Substituting \eqref{nabla-v,nabla-eta} and \eqref{nabla-eta-1} into \eqref{L-u-alpha-2}, we arrive at
\begin{align*}\label{L_a(u^aw^ga)}\nonumber
   &u^{1-\alpha}w^{1-\gamma}\mathcal L_\alpha(u^\alpha w^\gamma\eta)\\\nonumber
\leq &\gamma u\Big[-(\gamma-1)w^{-1}|\nabla w|^2-\Big(2(q-1)u^p|f'|^{q-2}f''-\left(2p-\gamma^{-1}\alpha (q-1)\right)u^{p-1}|f'|^q\Big)w^{\frac{q+2}2}\\
  &+2\left(\frac{f''}{f'}\right)'w^2-2|D^2v|^2\Big]\eta+\alpha \left((\alpha+1)\frac{(f')^2}{u}+2f''\right)w^2\eta-uw\Delta\eta\\\nonumber
  &+uw\left(|q|u^{p}|f'|^{q-1}w^{\frac{q-1}2}+2\left|\frac{f''}{f'}\right|w^{\frac12}\right)|\nabla\eta|-2\gamma u\langle\nabla w,\nabla \eta\rangle.
  \end{align*}
Therefore, we finish the proof.
\end{proof}

We further require the following weak form of maximum principle, which allows the coefficients of the first-order derivative to be unbounded, but bounded on a subset of $\Omega$.

\begin{lemma}\label{lem:maxi-pric}
Let $\Omega\subset\mathbb R^N$ be a domain, $\vec{b}\in \mathbb R^N$ and $A\geq0$. Set
\begin{equation*}
  \Omega_A^+=\{x\in\Omega;\ z(x)>A\}.
\end{equation*}
Assume that $z$ is positive continuous in $\Omega$ and $C^2$ on $\Omega_A^+$. If $z$ satisfies
\begin{equation}\label{z>A}
  -\Delta z+\vec{b}\cdot \nabla z<0\quad \text{in}\ \Omega_A^+,
\end{equation}
and
\begin{equation*}
\vec{b}\in L^{\infty}(\Omega_A^+),
\end{equation*}
then $z\leq A$ in $\Omega$.
\end{lemma}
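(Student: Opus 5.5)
The plan is a classical argument by contradiction at an interior maximum point. The strict inequality in \eqref{z>A} is exactly what makes this work, so no perturbation of $z$ should be needed. Throughout I use the standing situation of our applications: $z$ is of the form $u^\alpha w^\gamma\eta$ (or $m^{-\alpha}u^\alpha w^\gamma\eta$), with $\eta$ vanishing outside $B_{R'}$. Hence $\{z>A\}$ is relatively compact in $\Omega$. More generally, I take the lemma to be read under the hypothesis that $\sup_\Omega z$, if it exceeds $A$, is attained, i.e. $z\le A$ near $\partial\Omega$ (and at infinity if $\Omega$ is unbounded). I would state this interpretation explicitly, since without it the lemma cannot hold for arbitrary unbounded $z$.

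\emph{Step 1.} Suppose, to the contrary, that $z(x_1)>A$ for some $x_1\in\Omega$. Then $\Omega_A^+$ is nonempty. By continuity of $z$ and the remark above, the set $\{z\ge z(x_1)\}$ is a compact subset of $\Omega$. Hence $M:=\max_{\Omega} z$ is attained at some $\bar x\in\Omega$ with $M\ge z(x_1)>A$.

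\emph{Step 2.} Since $z(\bar x)>A$ and $z$ is continuous, $\bar x$ is an interior point of the open set $\Omega_A^+$. There $z$ is $C^2$, so the standard first- and second-order conditions at an interior maximum give
\[
\nabla z(\bar x)=0,\qquad \Delta z(\bar x)\le 0 .
\]
Evaluating the left side of \eqref{z>A} at $\bar x$ gives
\[
-\Delta z(\bar x)+\vec b(\bar x)\cdot\nabla z(\bar x)=-\Delta z(\bar x)\ge 0,
\]
which contradicts \eqref{z>A}. Here the hypothesis $\vec b\in L^\infty(\Omega_A^+)$ guarantees that $\vec b(\bar x)\cdot\nabla z(\bar x)$ is a finite quantity equal to $0$. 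This matters because in our setting $\vec b=\mathcal H_\alpha$ may blow up where $|\nabla u|=0$ when $q<0$, but such points are excluded from the superlevel set. Hence $z\le A$ in $\Omega$.

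\emph{Main obstacle.} The only delicate point is Step 1: ensuring the maximum is attained inside $\Omega_A^+$ rather than escaping to $\partial\Omega$ or to infinity. In the intended applications this is automatic from the cut-off $\eta$, since $z=0$ outside $B_{R'}$. If one instead only had the non-strict inequality $\le 0$, I would first try the usual perturbation $z_\varepsilon=z+\varepsilon e^{\lambda x_1}$ with $\lambda>\|\vec b\|_{L^\infty(\Omega_A^+)}$. This gives $-\Delta z_\varepsilon+\vec b\cdot\nabla z_\varepsilon<0$ on a bounded superlevel set, and one then lets $\varepsilon\to0$. This is precisely where the boundedness of $\vec b$ on $\Omega_A^+$ would enter essentially.
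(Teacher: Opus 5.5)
Your proof is correct, and it is a more self-contained version of the paper's argument. The paper shifts to $\bar z=z-A$, asserts $\bar z=0$ on $\partial\Omega_A^+$, and cites the classical maximum principle on $\Omega_A^+$. You instead produce an interior maximum point $\bar x\in\Omega_A^+$ and contradict the strict inequality directly via $\nabla z(\bar x)=0$, $\Delta z(\bar x)\le 0$; this is the elementary proof of the weak maximum principle in the strict case.

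The real gain is that you state explicitly the hypothesis the paper uses silently. The claim $\bar z=0$ holds only on $\partial\Omega_A^+\cap\Omega$, and nothing controls $z$ on $\partial\Omega_A^+\cap\partial\Omega$ or at infinity. This assumption cannot be dropped: on the unit ball, $z=1+|x|^2$ with $\vec b=0$ and $A=1$ meets every hypothesis of the lemma as written, yet $z>A$ away from the origin. In every application in Section~\ref{sect:gradient-est}, the lemma is used on $B_{R'}$ with $A>0$, and $z$ contains the cut-off $\eta$, so it extends continuously by $0$ to $\partial B_{R'}$. Hence $\{z>A\}$ is compactly contained in $B_{R'}$, and your reading is exactly the one the paper needs.

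One small correction. The bound $\vec b\in L^\infty(\Omega_A^+)$ plays no role in your argument, and an essential-supremum condition says nothing about the value at the single point $\bar x$ anyway. You only use that $\vec b(\bar x)$ is a finite vector, which is implicit in the strict inequality holding pointwise on $\Omega_A^+$. In the applications this is automatic, since $\mathcal H_\alpha$ is finite wherever $w>0$, hence on $\{z>A\}$. So your argument shows that the boundedness of $\vec b$ is superfluous for the strict-inequality version. It would matter only for the non-strict variant via the exponential perturbation, as you note at the end.
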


\begin{proof}
Suppose for contradiction that there exists $x_0\in \Omega$ such that $z(x_0)>A$. Set
\begin{equation*}
  \bar z(x)=z(x)-A>0,\quad x\in\Omega_A^+,
\end{equation*}
then
\begin{equation*}
  -\Delta\bar z+\vec{b}\cdot\nabla \bar z<0 \quad\text{in} \ \Omega_A^+.
\end{equation*}
However, $\bar z=0$ on $\partial \Omega_A^+$. The maximum principle of classical solutions deduces that $\bar z\leq 0$ in $\Omega_A^+$, which contradicts
\begin{equation*}
0<z(x_0)-A=\bar z(x_0)\leq 0.
\end{equation*}
It follows that $\Omega_A^+=\emptyset$, so $z\leq A$ in $\Omega$, which finishes the proof.
\end{proof}

\section{Gradient estimates}\label{sect:gradient-est}

In this section, we establish the local and global pointwise gradient estimates for solutions to equation \eqref{eq1}. As in past studies on gradient estimates, the proof is based on the elaborate Bernstein technique. Within this framework, by selecting different parameters $\alpha$ and $\gamma$ for $u^\alpha w^\gamma$, we demonstrate the differential inequalities of $(p,q)$ over various ranges.

For pointwise gradient estimates, these points in which $\nabla u=0$ are trivial. It therefore suffices to establish the estimates in $\Omega\setminus\Omega_0$. Unless specified otherwise, subsequent calculations will be carried out on the set $\{x\in B_{R'};\,|\nabla u(x)|>0\}$, which allows us to  apply Lemma \ref{lem:L(u-alpah-w-gamma)}.

\subsection{Estimates of $|\nabla \ln u|$}

To proceed, we shall first establish the estimates of $\mathcal L(w^\gamma\eta):=\mathcal L_0(w^\gamma\eta)$ in a specific local orthonormal frame.

\begin{lemma}\label{lem:e1}
Assume that $u$ is a positive solution of \eqref{eq1} in $\Omega$. Then for any $\gamma\geq1$, there exists a constant $C=C(\gamma)>0$ such that
\begin{align*}
  &w^{1-\gamma}\mathcal L(w^\gamma\eta)\\
  \leq&2\gamma\left[-(\gamma-1)+\frac{1-N+2\gamma^{1/2}}{N-1}\right]v_{11}^2\eta\\
  &-2\gamma \left[\left(q-1-\frac2{N-1}\right)uf''-p(f')^2\right]u^{p-1}|f'|^{q-2}w^{\frac{q+2}2}\eta\\
  &+2\gamma\left(\frac{f''}{f'}\right)'w^2\eta-\frac{2\gamma(1-\gamma^{-1/2})}{N-1}(f')^{2(q-1)}u^{2p}w^q\eta-\frac{2\gamma(1-\gamma^{-1/2})}{N-1}\left(\frac{f''}{f'}\right)^2w^2\eta\\
  &+w\left(|q|u^{p}|f'|^{q-1}w^{\frac{q-1}2}+2\left|\frac{f''}{f'}\right|w^{\frac12}\right)|\nabla\eta|+Cw\eta^{-1}|\nabla\eta|^2 +\sqrt{N}w|D^2\eta|
\end{align*}
holds in $\{x\in B_{R'};\,|\nabla u(x)|>0\}$.
\end{lemma}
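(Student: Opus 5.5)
The plan is to specialize Lemma \ref{lem:L(u-alpah-w-gamma)} to $\alpha=0$ and divide by $u>0$. This gives
\begin{align*}
w^{1-\gamma}\mathcal L(w^\gamma\eta)\leq{}&\gamma\Big[-(\gamma-1)w^{-1}|\nabla w|^2-2|D^2v|^2-2\big((q-1)u f''-p(f')^2\big)u^{p-1}|f'|^{q-2}w^{\frac{q+2}2}\\
&+2\Big(\frac{f''}{f'}\Big)'w^2\Big]\eta-w\Delta\eta+w\Big(|q|u^p|f'|^{q-1}w^{\frac{q-1}2}+2\Big|\frac{f''}{f'}\Big|w^{\frac12}\Big)|\nabla\eta|-2\gamma\langle\nabla w,\nabla\eta\rangle .
\end{align*}
The $|\nabla\eta|$ term already has the required form. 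Since $|\Delta\eta|\leq\sqrt N|D^2\eta|$, we also get $-w\Delta\eta\leq\sqrt N w|D^2\eta|$. What remains is to estimate $|\nabla w|^2$ and $|D^2v|^2$ from below in a local orthonormal frame, and to absorb the term $\langle\nabla w,\nabla\eta\rangle$.

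At a point where $\nabla u\neq0$, I choose an orthonormal frame with $e_1=\nabla v/|\nabla v|$, so that $v_1=w^{1/2}$ and $v_j=0$ for $j\geq2$. Then $w_j=2v_1v_{1j}$, hence $|\nabla w|^2=4w\sum_j v_{1j}^2\geq 4w v_{11}^2$. By Cauchy--Schwarz on the diagonal entries $i,j\geq2$,
\begin{equation*}
|D^2v|^2\geq v_{11}^2+2\sum_{j\geq2}v_{1j}^2+\frac{(\Delta v-v_{11})^2}{N-1}.
\end{equation*}
Equation \eqref{eq-v} gives $\Delta v=\mathcal A-\mathcal B$ with $\mathcal A=\frac{|f'|^q}{f'}u^pw^{\frac q2}$ and $\mathcal B=\frac{f''}{f'}w$. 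Expanding,
\begin{equation*}
(\Delta v-v_{11})^2=v_{11}^2-2v_{11}(\mathcal A-\mathcal B)+(\mathcal A-\mathcal B)^2 .
\end{equation*}
For the cross term I use Young's inequality with weight $\gamma^{1/2}$:
\begin{equation*}
2|v_{11}||\mathcal A-\mathcal B|\leq\gamma^{1/2}v_{11}^2+\gamma^{-1/2}(\mathcal A-\mathcal B)^2 .
\end{equation*}
This yields
\begin{equation*}
-\frac{2}{N-1}(\Delta v-v_{11})^2\leq\frac{2(\gamma^{1/2}-1)}{N-1}v_{11}^2-\frac{2(1-\gamma^{-1/2})}{N-1}(\mathcal A-\mathcal B)^2 .
\end{equation*}
Expanding $(\mathcal A-\mathcal B)^2$ produces three pieces:
\begin{itemize}
\item $\mathcal A^2=(f')^{2(q-1)}u^{2p}w^q$;
\item $\mathcal B^2=(f''/f')^2w^2$;
\item the mixed product $-2\mathcal A\mathcal B=-2u^p|f'|^{q-2}f''w^{\frac{q+2}2}$.
\end{itemize}
The mixed product merges with the existing $w^{\frac{q+2}2}$ term and shifts $q-1$ to $q-1-\frac{2}{N-1}$. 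Here I have to handle the factor $1-\gamma^{-1/2}$ multiplying the mixed term, either by keeping exact bookkeeping or by re-splitting the Young weights. Collecting the $v_{11}^2$ coefficients from $-(\gamma-1)\cdot4v_{11}^2$, $-2v_{11}^2$ and the term above then gives the bracket in the statement.

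For the cutoff gradient term, I bound $2\gamma|\langle\nabla w,\nabla\eta\rangle|\leq2\gamma|\nabla w||\nabla\eta|$ and apply Young:
\begin{equation*}
2\gamma|\nabla w||\nabla\eta|\leq\varepsilon\gamma w^{-1}|\nabla w|^2\eta+C(\gamma,\varepsilon)w\eta^{-1}|\nabla\eta|^2 .
\end{equation*}
The term $\varepsilon\gamma w^{-1}|\nabla w|^2\eta$ is absorbed into part of $-(\gamma-1)\gamma w^{-1}|\nabla w|^2\eta$ together with the spare $-4\gamma\sum_{j\geq2}v_{1j}^2\eta$ from $|D^2v|^2$. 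This is what produces $Cw\eta^{-1}|\nabla\eta|^2$ and explains why only $-(\gamma-1)$ rather than $-2(\gamma-1)$ survives in front of $v_{11}^2$.

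The main obstacle is this absorption when $\gamma$ is close to $1$. Then $-(\gamma-1)w^{-1}|\nabla w|^2$ is nearly zero, and the $v_{11}$ component of $\nabla w$ must be absorbed using the $-2v_{11}^2$ coming from $|D^2v|^2$. This uses up the slack that accounts for the difference between $-N+\gamma^{1/2}$ and the stated $1-N+2\gamma^{1/2}$. I would first take $\varepsilon$ small depending on $\gamma$ and check that the resulting $v_{11}^2$ coefficient matches the one in the statement.
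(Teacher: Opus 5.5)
Your overall plan is the paper's: Lemma~\ref{lem:L(u-alpah-w-gamma)} with $\alpha=0$, the frame $e_1=\nabla v/|\nabla v|$, Cauchy--Schwarz on $\sum_{i\ge2}v_{ii}=\Delta v-v_{11}$, substitution of \eqref{eq-v}, and Young on the cutoff term. However, the step you left open is a genuine gap, and only one of your two options closes it.

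Applying Young to $2v_{11}(\mathcal{A}-\mathcal{B})$ as one block leaves $-(1-\gamma^{-1/2})(\mathcal{A}-\mathcal{B})^2$. After multiplying by $2\gamma/(N-1)$, the mixed product then appears as $\frac{4\gamma(1-\gamma^{-1/2})}{N-1}\mathcal{A}\mathcal{B}\,\eta$ instead of $\frac{4\gamma}{N-1}\mathcal{A}\mathcal{B}\,\eta$. In other words, you get $q-1-\frac{2(1-\gamma^{-1/2})}{N-1}$ in place of $q-1-\frac{2}{N-1}$. The product $\mathcal{A}\mathcal{B}=u^p|f'|^{q-2}f''w^{\frac{q+2}2}$ has the sign of $f''$, which is not prescribed; it is negative for $f=-e^s$, exactly where the lemma is used. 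So when $v_{11}=0$ and $f''<0$, your block coming from $-2\gamma|D^2v|^2$ exceeds the stated one by $\frac{4\gamma^{1/2}}{N-1}|\mathcal{A}\mathcal{B}|\eta$, and your inequality does not imply the statement. No bookkeeping of that bound recovers it. No single Young weight $t$ helps either, since the mixed coefficient is then $2(1-t^{-1})\neq2$. For the later large-$\gamma$ application the discrepancy is only $O(\gamma^{-1/2})$, but the lemma as stated needs the split.

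The fix, which is what the paper does, is to expand $-(v_{11}-\mathcal{A}+\mathcal{B})^2$ completely and keep $+2\mathcal{A}\mathcal{B}$ with coefficient one. Then apply Young separately:
$2\mathcal{A}v_{11}\le\gamma^{-1/2}\mathcal{A}^2+\gamma^{1/2}v_{11}^2$ and $-2\mathcal{B}v_{11}\le\gamma^{-1/2}\mathcal{B}^2+\gamma^{1/2}v_{11}^2$.
The cost is $2\gamma^{1/2}v_{11}^2$, which is precisely where the $2\gamma^{1/2}$ in the statement comes from.

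After this correction, your slack count for the cutoff term changes. The $v_{11}^2$ coefficient produced by $-2\gamma|D^2v|^2$ becomes $\frac{2\gamma(2\gamma^{1/2}-N)}{N-1}$. The spare relative to the statement is therefore only:
\begin{itemize}
\item $\frac{2\gamma}{N-1}v_{11}^2$, not the gap between $-N+\gamma^{1/2}$ and $1-N+2\gamma^{1/2}$;
\item $2\gamma(\gamma-1)v_{11}^2$, the half of $-4\gamma(\gamma-1)v_{11}^2$ that the statement discards;
\item $4\gamma^2\sum_{j\ge2}v_{1j}^2$.
\end{itemize}
This is still enough. Since $w^{-1}|\nabla w|^2=4\sum_j v_{1j}^2$, the bound $2\gamma|\nabla w||\nabla\eta|\le\varepsilon\gamma w^{-1}|\nabla w|^2\eta+\frac{\gamma}{\varepsilon}w\eta^{-1}|\nabla\eta|^2$ is absorbed for any $0<\varepsilon\le\frac{\gamma-1}{2}+\frac{1}{2(N-1)}$. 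The off-diagonal part only needs $\varepsilon\le\gamma$, which then holds automatically.

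For $\gamma>1$ you can take $\varepsilon=\frac{\gamma-1}{2}$, giving $C=C(\gamma)$. This matches the paper, which absorbs into $\frac{\gamma(\gamma-1)}{4}w^{-1}|\nabla w|^2\eta$ and so actually requires $\gamma>1$. Your use of the off-diagonal $v_{1j}$ terms and the $\frac{1}{N-1}$ slack also covers $\gamma=1$, with $C$ then depending on $N$ as well.
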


\begin{proof}
 Set $\alpha=0$ in Lemma \ref{lem:L(u-alpah-w-gamma)}. Then for all $\gamma\geq 1$, we have
\begin{align}\label{Lalpha=0}\nonumber
&w^{1-\gamma}\mathcal L(w^\gamma\eta)\\\nonumber
\leq & \gamma \bigg[-(\gamma-1)w^{-1}|\nabla w|^2-2u^{p-1}\Big((q-1)u|f'|^{q-2}f''-p|f'|^q\Big)w^{\frac{q+2}2}+2\left(\frac{f''}{f'}\right)'w^2\\
&-2|D^2 v|^2\bigg]\eta+w\left(|q|u^{p}|f'|^{q-1}w^{\frac{q-1}2}+2\left|\frac{f''}{f'}\right|w^{\frac12}\right)|\nabla\eta|-2\gamma \langle\nabla w,\nabla \eta\rangle-w\Delta\eta.
\end{align}
Let $\{e_1,e_2,\ldots,e_N\}$ be an orthonormal frame of the tangent bundle $T\mathbb R^N$ on a domain with $w\neq 0$ such that $e_1=\nabla v/|\nabla v|$. Set $v_i=\partial v/\partial{e_i}$ for $i=\{1,2,\ldots,N\}$. Then a direct calculation shows that
\begin{equation*}
 v_1=\langle\nabla v,e_1\rangle=w^{\frac12},
\end{equation*}
and
\begin{equation}\label{v_11}
  v_{11}=\frac12 w^{-\frac12}w_1=\frac12w^{-1}\langle\nabla w,\nabla v\rangle.
\end{equation}
It follows from \eqref{v_11} and Cauchy--Schwarz's inequality that for $\gamma\geq 1$,
\begin{equation}\label{-4v_11}
 -(\gamma-1) w^{-1}|\nabla w|^2\leq -(\gamma-1)w^{-2}\langle\nabla w,\nabla v\rangle^2=-4(\gamma-1)v_{11}^2.
\end{equation}
By Cauchy's inequality, we arrive at
\begin{equation}\label{D2v-1}
  -|D^2v|^2\leq-v_{11}^2-\sum_{i=2}^N v_{ii}^2\leq -v_{11}^2-\frac{1}{N-1}\left(\sum_{i=2}^Nv_{ii}\right)^2.
\end{equation}
We begin by analyzing the expression for
\begin{equation*}
  \left(\sum_{i=2}^Nv_{ii}\right)^2=\left(-\Delta v+v_{11}\right)^2.
\end{equation*}
It follows from \eqref{eq-v} that
\begin{align}\label{sum-vii}\nonumber
&-\left(\sum_{i=2}^N v_{ii}\right)^2\\
=&-\left(-\frac{|f'|^q}{f'}u^pw^{\frac{q}2}+\frac{f''}{f'}w+v_{11}\right)^2\\\nonumber
=&-v_{11}^2-(f')^{2(q-1)}u^{2p}w^q-\left(\frac{f''}{f'}\right)^2w^2+2u^p|f'|^{q-2}f''w^{\frac{q+2}2}+2\frac{|f'|^q}{f'}u^pw^{\frac{q}2}v_{11}-2\frac{f''}{f'}wv_{11}.
\end{align}
Applying Young's inequality, we derive
\begin{equation}\label{2ab-1}
 2\frac{|f'|^q}{f'}u^pw^{\frac{q}2}v_{11}\leq \gamma^{-\frac12}|f'|^{2(q-1)}u^{2p}w^{q}+\gamma^{\frac12}v_{11}^2,
\end{equation}
and
\begin{equation}\label{2ab-2}
-2\frac{f''}{f'}wv_{11}\leq \gamma^{-\frac12}\left(\frac{f''}{f'}\right)^2w^2+\gamma^{\frac12}v_{11}^2.
\end{equation}
Substituting \eqref{sum-vii}--\eqref{2ab-2} into \eqref{D2v-1}, it  yields that
\begin{align}\label{D^2v<}\nonumber
  -|D^2v|^2\leq &\left(\frac{2\gamma^{1/2}}{N-1}-1\right)v_{11}^2-\frac{1-\gamma^{-1/2}}{N-1}|f'|^{2(q-1)}u^{2p}w^q\\
  &-\frac{1-\gamma^{-1/2}}{N-1}\left(\frac{f''}{f'}\right)^2w^2+\frac{2}{N-1}u^p|f'|^{q-2}f''w^{\frac{q+2}2}.
\end{align}

Next, we turn to terms containing $\nabla \eta$ and $\Delta\eta$. By virtue of Cauchy--Schwarz's and Young's inequalities, we arrive at
\begin{align}
-2\gamma \langle\nabla w,\nabla\eta\rangle\leq&\frac{\gamma(\gamma-1)}4 w^{-1}|\nabla w|^2\eta+C(\gamma)w\eta^{-1}|\nabla\eta|^2.
\end{align}
In addition, we have
\begin{equation}\label{Delta-eta}
-w\Delta\eta\leq \sqrt{N} w|D^2\eta|.
\end{equation}
From \eqref{-4v_11}, \eqref{D^2v<}--\eqref{Delta-eta} and \eqref{Lalpha=0}, we conclude that
\begin{align*}
  &w^{1-\gamma}\mathcal L(w^\gamma\eta)\\
  \leq&2\gamma\left[-(\gamma-1)+\frac{1-N+2\gamma^{1/2}}{N-1}\right]v_{11}^2\eta\\
  &-2\gamma \left[\left(q-1-\frac2{N-1}\right)uf''-p(f')^2\right]u^{p-1}|f'|^{q-2}w^{\frac{q+2}2}\eta\\
  &+2\gamma\left(\frac{f''}{f'}\right)'w^2\eta-\frac{2\gamma(1-\gamma^{-1/2})}{N-1}(f')^{2(q-1)}u^{2p}w^q\eta-\frac{2\gamma(1-\gamma^{-1/2})}{N-1}\left(\frac{f''}{f'}\right)^2w^2\eta\\
  &+w\left(|q|u^{p}|f'|^{q-1}w^{\frac{q-1}2}+2\left|\frac{f''}{f'}\right|w^{\frac12}\right)|\nabla\eta|+C(\gamma)w\eta^{-1}|\nabla\eta|^2 +\sqrt{N}w|D^2\eta|.
\end{align*}
This completes the proof of Lemma \ref{lem:e1}.
\end{proof}

\noindent\textbf{The proof of Theorem \ref{them:p+q<(N+3)/}.} We proceed with the proof in the following three steps.

{\bf Step 1.} {\it Construct a differential inequality via an appropriate auxiliary function.} Let
\begin{equation}\label{f=e^s}
  f(s)=-e^s.
\end{equation}
It follows immediately that  $f',f''<0$. With this choice, we can deduce that
\begin{equation}\label{f''/f'=1}
  uf''=-(f')^2,\quad \frac{f''}{f'}=1,\quad\text{and} \quad \left(\frac{f''}{f'}\right)'=0.
\end{equation}
Moreover, it follows from Lemma \ref{lem:e1} that
\begin{align}\label{L(w-gamma)*}\nonumber
  &w^{1-\gamma}\mathcal L(w^\gamma\eta)\\
  \leq&\gamma\left[-2(\gamma-1)+\frac{2\left(1-N+2\gamma^{1/2}\right)}{N-1}\right]v_{11}^2\eta+2\gamma \left(p+q-\frac{N+1}{N-1}\right)u^{p+q-1}w^{\frac{q+2}2}\eta\\\nonumber
  &-\frac{2\gamma\left[1-\gamma^{-1/2}-(2\gamma)^{-1}(N-1)\right]}{N-1}\left(u^{2(p+q-1)}w^q\eta+w^2\eta\right)-u^{2(p+q-1)}w^q\eta-w^2\eta\\\nonumber
  &+w\left(|q|u^{p+q-1}w^{\frac{q-1}2}+2w^{\frac12}\right)|\nabla\eta|+C(\gamma)w\eta^{-1}|\nabla\eta|^2+\sqrt{N}w|D^2\eta|.
\end{align}

{\bf Step 2.} {\it The sign for the coefficient of  $u^{p+q-1}w^{(q+2)/2}$.} Using the basic inequality, we obtain
\begin{align*}
&-\frac{2\gamma\left[1-\gamma^{-1/2}-(2\gamma)^{-1}(N-1)\right]}{N-1}\left(u^{2(p+q-1)}w^q\eta+w^2\eta\right)\\
\leq& -\frac{4\gamma}{N-1}\left[1-\gamma^{-1/2}-(2\gamma)^{-1}(N-1)\right]u^{p+q-1}w^{\frac{q+2}2}\eta,
\end{align*}
which together with \eqref{L(w-gamma)*} yields that
\begin{align}\label{O(gam)}\nonumber
  &w^{1-\gamma}\mathcal L(w^\gamma\eta)\\
  \leq&\gamma\left[-2(\gamma-1)+\frac{2\left(1-N+2\gamma^{1/2}\right)}{N-1}\right]v_{11}^2\eta-u^{2(p+q-1)}w^q\eta-w^2\eta\\\nonumber
  &+2\gamma \left[p+q-\frac{N+1}{N-1}-\frac{2\left(1-\gamma^{-1/2}-(2\gamma)^{-1}(N-1)\right)}{N-1}\right]u^{p+q-1}w^{\frac{q+2}2}\eta\\\nonumber
  &+w\left(|q|u^{p+q-1}w^{\frac{q-1}2}+2w^{\frac12}\right)|\nabla\eta|+C(\gamma)w\eta^{-1}|\nabla\eta|^2+\sqrt{N}w|D^2\eta|.
\end{align}
Observe that
\begin{equation*}
  1-\gamma^{-1/2}-(2\gamma)^{-1}(N-1)=1-\gamma^{-\frac12}+o\left(\gamma^{-\frac12}\right)\quad\text{as}\quad \gamma\to\infty,
\end{equation*}
then there exists a sufficiently large $\gamma_1=\gamma(N,p,q)>0$ such that for $\gamma\geq \gamma_1$,
\begin{align}\label{<1/2<0}\nonumber
 &p+q-\frac{N+1}{N-1}-\frac{2\left(1-\gamma^{-1/2}-(2\gamma)^{-1}(N-1)\right)}{N-1}\\
 =&p+q-\frac{N+3}{N-1}+O\left(\gamma^{-\frac12}\right)<0,
\end{align}
where we have employed the condition $p+q<(N+3)/(N-1)$. In addition,  there exists $\gamma_2=\gamma(N)>0$ such that for  $\gamma\geq \gamma_2$, the coefficient of $v_{11}^2\eta$ in \eqref{O(gam)} is negative. From now on, we fix
\begin{equation*}
  \gamma=\gamma_0(N,p,q)=\max\{\gamma_1,\gamma_2\}.
\end{equation*}
Combining estimates \eqref{O(gam)} and \eqref{<1/2<0}, we consequently deduce that
\begin{align}\label{-w2eta}\nonumber
  w^{1-\gamma}\mathcal L(w^\gamma\eta)\leq&-u^{2(p+q-1)}w^q\eta-w^2\eta+w\left(|q|u^{p+q-1}w^{\frac{q-1}2}+2w^{\frac12}\right)|\nabla\eta|\\
  &+C(N,p,q)w\eta^{-1}|\nabla\eta|^2+\sqrt{N}w|D^2\eta|.
\end{align}

{\bf Step 3.} {\it Maximum principle argument and the completion of proof.}  Set $z=w^\gamma\eta$. We reformulate differential inequality \eqref{-w2eta} using the variable $z$.  Applying Young's inequality yields
\begin{equation*}
  |q|u^{p+q-1}w^{\frac{q+1}2}|\nabla\eta|\leq \frac12 u^{2(p+q-1)}w^{q}\eta+C(q)w\eta^{-1}|\nabla\eta|^2.
\end{equation*}
Accordingly, \eqref{-w2eta} is rewritten as
\begin{align}\label{L(z)}\nonumber
  &\mathcal L(z)\\
  \leq&-w^{\gamma+1}\eta+2w^{\gamma+\frac12}|\nabla\eta|+Cw^{\gamma}\eta^{-1}|\nabla\eta|^2+\sqrt{N}w^{\gamma}|D^2\eta|\\\nonumber
=&z^{\frac{\gamma+1}{\gamma}}\eta^{-\frac{1}{\gamma}}\left(-1+2z^{-\frac{1}{2\gamma}}\eta^{-\frac{2\gamma-1}{2\gamma}}|\nabla\eta|+Cz^{-\frac1{\gamma}}\eta^{-\frac{2\gamma-1}{\gamma}}|\nabla\eta|^2+\sqrt{N}z^{-\frac{1}{\gamma}}\eta^{-\frac{\gamma-1}{\gamma}}|D^2\eta|\right),
\end{align}
where $C=C(N,p,q)>0$.

We now consider terms containing $|\nabla\eta|$ and $|D^2\eta|$. Observe  that for $\gamma>1$,
\begin{equation*}
  \frac{\gamma-1}{\gamma}<\frac{2\gamma-1}{2\gamma}<1.
\end{equation*}
Using the properties of $\eta$ in \eqref{dfi-eta} with $a=(2\gamma-1)/(2\gamma)$, it follows from \eqref{L(z)} that
\begin{align*}
  \mathcal L(z)\leq -z^{\frac{\gamma+1}{\gamma}}\eta^{-\frac{1}{\gamma}}\left(1-Cz^{-\frac{1}{2\gamma}}R^{-1}-Cz^{-\frac1{\gamma}}R^{-2}\right),
\end{align*}
where $C=C(N,p,q)>0$. Then there exists $C_1=C(N,p,q)>0$ such that for
\begin{equation*}
z>A=A(N,p,q,R):=C_1R^{-2\gamma},
\end{equation*}
it holds
\begin{equation*}
  Cz^{-\frac{1}{2\gamma}}R^{-1}+Cz^{-\frac1{\gamma}}R^{-2}<\frac12.
\end{equation*}
One can further deduce that
\begin{equation*}
  \mathcal L(z)\leq -\frac12z^{\frac{\gamma+1}{\gamma}}\eta^{-\frac{1}{\gamma}}<0\quad\text{in}\ D_A^+:= \{x\in B_{R'};\,z(x)>A\}.
\end{equation*}

Applying the regularity theory on elliptic equation \eqref{eq1}, we know $u$ is smooth on $D_A^+$. Since $\eta\leq1$ and $|\nabla u|=|f'||\nabla v|=|f'|w^{1/2}$, it follows from \eqref{f=e^s} that there exists $C=C(N,p,q,R)>0$ such that
\begin{equation}\label{nabla-u>C}
  |\nabla u|=uw^{\frac12}\geq C_1^{1/\gamma}uR^{-1}\geq C\quad \text{in}\ D_A^+.
\end{equation}
Therefore, if $q\leq 1$, \eqref{H_alpha} together with \eqref{nabla-u>C} yields that
\begin{align*}
|\mathcal H_0|\leq &|q|u^p|f'|^{q-1}w^{\frac{q-2}2}|\nabla v|+2\left|\frac{f''}{f'}\right||\nabla v|\\
=&|q|u^p |\nabla u|^{q-1}+2\left|\frac{f''}{f'}\right||\nabla v|\leq C(N,p,q,R)\quad\text{in}\ D_A^+.
\end{align*}
However, for $q>1$, the continuity of $u$ and $\nabla u$ guarantees the identical conclusion. Thus we achieve  $z\in C^2(D_A^+)$ and $\mathcal H_0\in L^\infty(D_A^+)$. It follows from Lemma \ref{lem:maxi-pric} that
\begin{equation*}
  z\leq C_1R^{-2\gamma}\quad\text{in}\ B_{R'}.
\end{equation*}
Using $z=w^\gamma\eta$ and $\eta=1$ in $B_{R/2}$, the preceding inequality implies that
\begin{equation*}
  \frac{|\nabla u|}{u}=\frac{|\nabla u|}{|f'|}=|\nabla v|=w^{\frac12}\leq CR^{-1}\quad \text{in}\ B_{R/2},
\end{equation*}
where $C=C(N,p,q)>0$. In particular, the estimates above remains valid at $x_0$. Owing to the arbitrariness of $x_0\in \Omega$, we conclude that
\begin{equation*}
  \left(\frac{|\nabla u|}{u}\right)(x)\leq C{\rm dist}^{-1}(x,\partial\Omega),\quad x\in\Omega,
\end{equation*}
where $C=C(N,p,q)>0$. The proof of Theorem \ref{them:p+q<(N+3)/} is complete.
\hfill$\Box$

\noindent\textbf{The proof of Corollary \ref{corol:p+q<}.}
Assume that $u$ is a positive solution of \eqref{eq1} in $\mathbb R^N$. Letting ${\rm dist}(x,\partial\Omega)\to \infty$ in Theorem \ref{them:p+q<(N+3)/}, we obtain that
\begin{equation*}
  \nabla u\equiv 0\quad\text{in}\ \mathbb R^N.
\end{equation*}
Then all positive solutions of \eqref{eq1} in $\mathbb R^N$ are constants, which contradicts that $0=\int_{\mathbb R^N}\langle\nabla u,\nabla\psi\rangle=\int_{\mathbb R^N} u^p|\nabla u|^q\psi=\infty$ for $\psi\in C^{\infty}_0(\mathbb R^N)$ for $q<0$. We therefore  derive the nonexistence of nontrivial positive solution of \eqref{eq1} in $\mathbb R^N$.
\hfill$\Box$

\subsection{Estimates of $|\nabla u^{-\sigma}|$ with $\sigma>0$}

We now derive a differential inequality for $u^\alpha w$ with $\alpha\neq 0$, which can be seen as a perturbation of the $\mathcal L(w^\gamma)$ established in Lemma \ref{lem:e1} with $\gamma=1$.

\begin{lemma}\label{lem:L(uw)}
Assume that $u$ a positive solution of \eqref{eq1} in $\Omega$. Then for any $\alpha\in \mathbb R$ and $0<\varepsilon<1$, there exists $C(\varepsilon)>0$ such that
\begin{align*}
  &u^{-\alpha}\mathcal L_\alpha(u^\alpha w\eta)\\\nonumber
  \leq&-\left[2\Big(q-1-\frac{2-\varepsilon}{N}\Big)uf''-\left(2p-\alpha (q-1)\right)|f'|^2\right]u^{p-1}|f'|^{q-2}w^{\frac{q+2}2}\eta\\
  &-\frac{2-\varepsilon}N|f'|^{2(q-1)}u^{2p}w^q\eta+\alpha \bigg[(\alpha+1)\frac{(f')^2}{u^2}+\frac{2f''}{u}-\frac{2-\varepsilon}{\alpha N}\left(\frac{f''}{f'}\right)^2+\frac2{\alpha}\left(\frac{f''}{f'}\right)'\bigg]w^2\eta\\
  &+w\left(|q|u^{p}|f'|^{q-1}w^{\frac{q-1}2}+2\left|\frac{f''}{f'}\right|w^{\frac12}\right)|\nabla\eta|-w\Delta\eta-2\langle\nabla w,\nabla \eta\rangle+C(\varepsilon)w\eta^{-1}|\nabla\eta|^2
\end{align*}
holds in $\{x\in B_{R'};\,|\nabla u(x)|>0\}$.
\end{lemma}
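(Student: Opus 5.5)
The plan is to specialize Lemma \ref{lem:L(u-alpah-w-gamma)} to $\gamma=1$ and then bound the Hessian term $-2|D^2v|^2$ from above by the square of the Laplacian, which \eqref{eq-v} expresses in terms of $u$, $w$ and $f$. With $\gamma=1$ the term $-(\gamma-1)w^{-1}|\nabla w|^2$ drops out, $w^{1-\gamma}=1$, and $\gamma^{-1}\alpha(q-1)=\alpha(q-1)$. Dividing \eqref{L_a(u^aw^ga)} by $u>0$ gives
\begin{align*}
u^{-\alpha}\mathcal L_\alpha(u^\alpha w\eta)\leq&\Big[-2|D^2v|^2-\Big(2(q-1)u^p|f'|^{q-2}f''-\big(2p-\alpha(q-1)\big)u^{p-1}|f'|^q\Big)w^{\frac{q+2}2}\\
&+2\Big(\frac{f''}{f'}\Big)'w^2\Big]\eta+\alpha\Big((\alpha+1)\frac{(f')^2}{u^2}+\frac{2f''}{u}\Big)w^2\eta-w\Delta\eta\\
&+w\Big(|q|u^{p}|f'|^{q-1}w^{\frac{q-1}2}+2\Big|\frac{f''}{f'}\Big|w^{\frac12}\Big)|\nabla\eta|-2\langle\nabla w,\nabla\eta\rangle
\end{align*}
on $\{x\in B_{R'};\,|\nabla u(x)|>0\}$. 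All the $\eta$-derivative terms already have the form required in the statement.

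The key step is the Hessian bound. We split $-2|D^2v|^2=-(2-\varepsilon)|D^2v|^2-\varepsilon|D^2v|^2$ and discard the last piece, which is nonpositive. Since $\gamma=1$, there is no $v_{11}^2$ reservoir as in Lemma \ref{lem:e1}. We therefore use the full trace inequality $|D^2v|^2\geq \frac1N(\Delta v)^2$ rather than the frame decomposition. By \eqref{eq-v},
\begin{equation*}
(\Delta v)^2=(f')^{2(q-1)}u^{2p}w^q+\Big(\frac{f''}{f'}\Big)^2w^2-2u^p|f'|^{q-2}f''w^{\frac{q+2}2},
\end{equation*}
where we used $u=-f$ and $\frac{|f'|^q}{f'}\cdot\frac{f''}{f'}=|f'|^{q-2}f''$. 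Hence
\begin{equation*}
-(2-\varepsilon)|D^2v|^2\leq-\frac{2-\varepsilon}N(f')^{2(q-1)}u^{2p}w^q-\frac{2-\varepsilon}N\Big(\frac{f''}{f'}\Big)^2w^2+\frac{2(2-\varepsilon)}Nu^p|f'|^{q-2}f''w^{\frac{q+2}2}.
\end{equation*}

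It remains to collect terms. The mixed term combines with $-2(q-1)u^p|f'|^{q-2}f''w^{(q+2)/2}\eta$ to give $-2\big(q-1-\frac{2-\varepsilon}N\big)uf''\,u^{p-1}|f'|^{q-2}w^{\frac{q+2}2}\eta$. Writing $u^{p-1}|f'|^q=u^{p-1}|f'|^{q-2}|f'|^2$ produces exactly the first bracket of the statement. The $w^2\eta$ terms are $2(f''/f')'$, $-\frac{2-\varepsilon}N(f''/f')^2$ and the $\alpha$-term. Factoring out $\alpha$ formally (for $\alpha\neq0$; for $\alpha=0$ one reads the bracket multiplied out) gives the stated bracket. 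Finally, we add the nonnegative quantity $C(\varepsilon)w\eta^{-1}|\nabla\eta|^2$ to the right-hand side. This only weakens the inequality, and it keeps the statement in the form used later, where $\varepsilon|D^2v|^2$-type slack may be traded against $\langle\nabla w,\nabla\eta\rangle$.

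The only real care point is bookkeeping: the signs of $f'$ and $f''$ are arbitrary, so $(f')^2$ must not be conflated with $|f'|^{q}$-type factors. The main conceptual step is recognizing that at $\gamma=1$ one must use $|D^2v|^2\geq(\Delta v)^2/N$, which yields the constant $\frac{2-\varepsilon}{N}$ in place of the $\frac{2}{N-1}$ of Lemma \ref{lem:e1}.
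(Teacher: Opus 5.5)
Your algebra is correct, and as a proof of the inequality exactly as printed, which still carries the term $-2\langle\nabla w,\nabla\eta\rangle$, your argument is complete. Discarding $-\varepsilon|D^2v|^2\eta\le 0$ and appending the nonnegative $C(\varepsilon)w\eta^{-1}|\nabla\eta|^2$ are both legitimate, since $\eta>0$ in $B_{R'}$. However, you depart from the paper at the one step that gives the lemma its content. The paper does not throw the $\varepsilon$-piece away; it spends it on the cross term. Since $\nabla w=2D^2v\,\nabla v$, one has $|\nabla w|\le 2w^{1/2}|D^2v|$. Young's inequality then gives $2\langle\nabla w,\nabla\eta\rangle\le 4w^{1/2}|D^2v||\nabla\eta|\le\varepsilon|D^2v|^2\eta+\frac{4}{\varepsilon}w\eta^{-1}|\nabla\eta|^2$, which is \eqref{nablaw.nablaeta}. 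That is where $C(\varepsilon)$ genuinely comes from. The inequality the paper actually derives contains no $\langle\nabla w,\nabla\eta\rangle$ term at all; its presence in the printed statement appears to be a slip.

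The distinction matters downstream. In \eqref{alpha-espsilon}, in the proof of Theorem \ref{them:p+q>=(N+3)/}, the cut-off terms are absorbed using only $w^2\eta$ and $u^{2(p+q-1)}w^q\eta$ together with \eqref{dfi-eta}. A term $\langle\nabla w,\nabla\eta\rangle$, which involves $D^2v$, is not bounded by these, so your version of the lemma would not feed into that step as written. Your closing remark that the $\varepsilon|D^2v|^2$ slack may be traded against $\langle\nabla w,\nabla\eta\rangle$ later is therefore self-defeating. You have already discarded that slack, and no Hessian term survives in your final inequality, so the $C(\varepsilon)$ term you append is pure padding.

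The repair is local: apply the Young estimate above instead of dropping $-\varepsilon|D^2v|^2\eta$, and the rest of your argument goes through unchanged. A minor point: write $|f'|^{2(q-1)}$ rather than $(f')^{2(q-1)}$, since $f'$ may be negative and $q$ need not be an integer.
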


\begin{proof}
Taking $\gamma=1$ in Lemma \ref{lem:L(u-alpah-w-gamma)} gives that
\begin{align}\label{L-gamm=1}
  &u^{1-\alpha}\mathcal L_\alpha(u^\alpha w\eta)\\\nonumber
  \leq& u\left[-2|D^2 v|^2-\Big(2(q-1)u^p|f'|^{q-2}f''-\left(2p-\alpha (q-1)\right)u^{p-1}|f'|^q\Big)w^{\frac{q+2}2}+2\left(\frac{f''}{f'}\right)'w^2\right]\eta\\\nonumber
  &+\alpha \left((\alpha+1)\frac{(f')^2}{u}w+2f''\right)w^2\eta+uw\left(|q|u^{p}|f'|^{q-1}w^{\frac{q-1}2}+2\left|\frac{f''}{f'}\right|w^{\frac12}\right)|\nabla\eta|\\\nonumber
  &-uw\Delta\eta-2u\langle\nabla w,\nabla \eta\rangle.
\end{align}
By Cauchy--Schwarz's and Young's inequalities, for any $\varepsilon>0$, there exists $C(\varepsilon)>0$ such that
\begin{equation}\label{nablaw.nablaeta}
  2u\langle\nabla w,\nabla\eta\rangle\leq 2u|\nabla w||\nabla\eta|\leq 4u|D^2v|w^{\frac12}|\nabla\eta|\leq u\left(\varepsilon|D^2v|^2\eta+C(\varepsilon)w\eta^{-1}|\nabla \eta|^2\right),
\end{equation}
and together with \eqref{eq-v}, it yields that
\begin{align}\label{-D2v<-1/N*}\nonumber
  -|D^2 v|^2\eta\leq& -\frac1N(\Delta v)^2\eta\\
  =&-\frac1N\bigg[|f'|^{2(q-1)}u^{2p}w^q+\left(\frac{f''}{f'}\right)^2w^2-2u^p|f'|^{q-2}f''w^{\frac{q+2}2}\bigg]\eta.
\end{align}
 Substituting \eqref{nablaw.nablaeta} and \eqref{-D2v<-1/N*} into \eqref{L-gamm=1} leads to
\begin{align*}
  &u^{-\alpha}\mathcal L_\alpha(u^\alpha w\eta)\\\nonumber
  \leq&-\left(2\Big(q-1-\frac{2-\varepsilon}{N}\Big)uf''-\left(2p-\alpha (q-1)\right)|f'|^2\right)u^{p-1}|f'|^{q-2}w^{\frac{q+2}2}\eta\\
  &-\frac{2-\varepsilon}N|f'|^{2(q-1)}u^{2p}w^q\eta+\alpha \bigg[(\alpha+1)\frac{(f')^2}{u^2}+\frac{2f''}{u}-\frac{2-\varepsilon}{\alpha N}\left(\frac{f''}{f'}\right)^2+\frac2{\alpha}\left(\frac{f''}{f'}\right)'\bigg]w^2\eta\\
  &+w\left(|q|u^{p}|f'|^{q-1}w^{\frac{q-1}2}+2\left|\frac{f''}{f'}\right|w^{\frac12}\right)|\nabla\eta|-w\Delta\eta+C(\varepsilon)w\eta^{-1}|\nabla\eta|^2.
\end{align*}
Thus, we complete the proof of Lemma \ref{lem:L(uw)}.
\end{proof}

\noindent\textbf{The proof of Theorem \ref{them:p+q>=(N+3)/}.}
We take the auxiliary function $f$ as \eqref{f=e^s}. Since  $u\geq m>0$, the function $f$ maps $[\ln m,+\infty)$ into $(-\infty,-m]$. Combining Lemma \ref{lem:L(uw)} with  \eqref{f''/f'=1}, we deduce that
\begin{align}\label{alpha-espsilon}
&u^{-\alpha}\mathcal L_\alpha(u^\alpha w\eta) \\\nonumber
\leq& 2\left(p+\left(1-\frac{\alpha}2\right)q-\frac{N+2-\varepsilon}N+\frac{\alpha}2\right)u^{p+q-1}w^{\frac{q+2}2}\eta+\left(\alpha(\alpha-1)-\frac{2-\varepsilon}{N}\right)w^2\eta\\\nonumber
  & -\frac{2-\varepsilon}Nu^{2(p+q-1)}w^q\eta+|q|u^{p+q-1}w^{\frac{q+1}2}|\nabla\eta|+2w^{\frac32}|\nabla \eta|-w\Delta\eta+C(\varepsilon)w\eta^{-1}|\nabla\eta|^2.
\end{align}

Now we analyze the sign of coefficients of terms $u^{p+q-1}w^{(q+2)/2}\eta$ and $w^2\eta$. Let
\begin{equation*}
 Q(\alpha)= \alpha(\alpha-1)-\frac{2}{N}.
\end{equation*}
Equation $Q(\alpha)=0$ admits a real negative root
\begin{equation*}
  \alpha_0=\frac12-\sqrt{\frac{N+8}{4N}}<0.
\end{equation*}
Therefore, for any $\alpha_0<\alpha\leq0$, we choose $\varepsilon=\varepsilon(\alpha)>0$ small enough such that
\begin{equation}\label{alpha-0}
  \alpha(\alpha-1)-\frac{2-\varepsilon}{N}<\frac12Q(\alpha)<0.
\end{equation}
Furthermore, from the assumption \eqref{Q(p,q)<sigma} with $\sigma=-\alpha/2$, there exists $\varepsilon_0=\varepsilon(N,p,q,\alpha)>0$, such that for any $0<\varepsilon<\varepsilon_0$,
\begin{equation}\label{alpha-0<alpha}
p+\left(1-\frac{\alpha}2\right)q-\frac{N+2-\varepsilon}N+\frac{\alpha}2<0.
\end{equation}
By Young's inequality, we have
\begin{equation*}
  |q|u^{p+q-1}w^{\frac{q+1}2}|\nabla\eta|\leq \frac1N u^{2(p+q-1)}w^{q}\eta+C(N,q)w\eta^{-1}|\nabla\eta|^2.
\end{equation*}
Together \eqref{Delta-eta} with \eqref{alpha-espsilon}--\eqref{alpha-0<alpha}, we obtain that there exists $C=C(N,p,q,\alpha)>0$ such that
\begin{align*}
  &u^{-\alpha}\mathcal L_\alpha(u^\alpha w\eta)\\
  \leq& \frac12Q(\alpha)w^2\eta +2w^{\frac32}|\nabla \eta|+\sqrt{N}w|D^2\eta|+Cw\eta^{-1}|\nabla\eta|^2\\
=&w^2\eta\left(\frac12Q(\alpha)+2(w^2\eta^2)^{-\frac14}\eta^{-\frac12}|\nabla\eta|+\sqrt N(w^2\eta^2)^{-\frac12}|D^2\eta|+C(w^2\eta^2)^{-\frac12}\eta^{-1}|\nabla\eta|^2\right).
\end{align*}

Combining $Q(\alpha)<0$  with the properties of $\eta$ in \eqref{dfi-eta} for $a=1/2$, there exists $C_1=C(N,p,q,\alpha)>0$ such that when
\begin{equation*}
w\eta\geq C_1R^{-2},
\end{equation*}
we have
\begin{equation*}
2(w^2\eta^2)^{-\frac14}\eta^{-\frac12}|\nabla\eta|+\sqrt N(w^2\eta^2)^{-\frac12}|D^2\eta|+C(w^2\eta^2)^{-\frac12}\eta^{-1}|\nabla\eta|^2\leq-\frac14Q(\alpha).
\end{equation*}
Setting $z=u^{\alpha}w\eta$, from $u\geq m>0$, we further deduce that
\begin{equation*}
  \mathcal L_\alpha(z)\leq \frac14 Q(\alpha)u^\alpha w^2\eta<0\quad\text{in}\ D^+_A:=\{x\in B_{R'};\,z(x)>C_1m^{\alpha}R^{-2}\}.
\end{equation*}
From the regularity theory on elliptic equation \eqref{eq1}, we know that $u$ is smooth on $D_A^+$. Noting that $\eta\leq1$, and $|\nabla u|=|f'|w^{1/2}$, we obtain
\begin{equation}\label{nabla-low}
  |\nabla u|=uw^{\frac12}\geq u^{\frac{2-\alpha}2}z^{\frac12}\geq C(N,p,q,\alpha) mR^{-1}\quad\text{in}\ D_A^+.
\end{equation}
For $q<1$, it follows from \eqref{H_alpha} and \eqref{nabla-low} that
\begin{align*}
|\mathcal H_\alpha|\leq &|q|u^p|f'|^{q-1}w^{\frac{q-2}2}|\nabla v|+2\left|\frac{f''}{f'}\right||\nabla v|+2|\alpha|\left|\frac{f'}{f}\right||\nabla v|\\
=&|q|u^p |\nabla u|^{q-1}+2\left|\frac{f''}{f'}\right||\nabla v|+2|\alpha|\left|\frac{f'}{f}\right||\nabla v|\leq C(N,p,q,\alpha,m,R)
\end{align*}
in $D^+_A$. However, if $q\geq1$, the same conclusion remains valid owing to the continuity of $u$ and $\nabla u$. Consequently, we have $z\in C^2(D_A^+)$ and $\mathcal H_\alpha\in L^\infty(D_A^+)$. From Lemma \ref{lem:maxi-pric}, it follows that
\begin{equation*}
  z\leq C_1m^\alpha R^{-2}\quad\text{in}\ B_{R'}.
\end{equation*}
Using $z=u^\alpha |f'|^{-2}|\nabla u|^2\eta=u^{\alpha-2}|\nabla u|^2\eta$ and $\eta=1$ in $B_{R/2}$, the inequality above gives that for $\alpha<0$,
\begin{equation}\label{est-nabl-u^a}
  |\nabla u^{\frac{\alpha}2}|=\frac{|\alpha|}2u^{\frac{\alpha-2}2}|\nabla u|\leq C(N,p,q,\alpha)m^{\frac{\alpha}2}R^{-1}\quad \text{in}\ B_{R/2}.
\end{equation}
Particularly, \eqref{est-nabl-u^a} remains valid at $x_0$. Following the arbitrariness of $x_0\in\Omega$ and $\sigma=-\alpha/2$, we obtain that for any $0<\sigma<\big(\sqrt{(N+8)/N}-1\big)/4$,
\begin{equation*}
  |\nabla u^{-\sigma}(x)|\leq C(N,p,q,\sigma)m^{-\sigma}{\rm dist}^{-1}(x,\partial\Omega),\quad x\in\Omega.
\end{equation*}
This completes the proof of Theorem \ref{them:p+q>=(N+3)/}.
\hfill$\Box$

To derive the gradient estimates for positive solutions to \eqref{eq1} in $\mathbb R^N$, we recall a well-known lemma from \cite[Lemma 2.3]{Serrin-Zhou-Acta}, which provides a lower bound for positive weak super-harmonic functions.

\begin{lemma}\label{lem:u>C}
Suppose $\big\{|x|>R_0>0\big\}\subset\Omega$. Let $u$ be a positive weak solution of the inequality
\begin{equation*}
  -\Delta_mu\geq 0,\quad x\in\Omega,
\end{equation*}
where $\Delta_mu={\rm div}\big (|\nabla u|^{m-2}\nabla u\big)$. Then there exists a constant $C=C(m,N,u,R_0)>0$ such that
\begin{equation*}
  u(x)\geq C|x|^{-\frac{N-m}{m-1}}
\end{equation*}
provided $N>m$, while
\begin{equation*}
  \liminf_{|x|\to\infty}u(x)>0
\end{equation*}
if $N\leq m$.
\end{lemma}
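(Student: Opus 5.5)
The plan is the classical comparison argument with radial $m$-harmonic functions on annuli, followed by letting the outer radius tend to infinity. Fix $R_1>R_0$, say $R_1=2R_0$. First I show that
\begin{equation*}
  c:=\operatorname*{ess\,inf}_{R_1\le |x|\le 2R_1}u>0 .
\end{equation*}
Since $u$ is a positive weak supersolution of $-\Delta_m u\ge 0$, I would take its lower semicontinuous representative. The weak Harnack inequality of Trudinger for supersolutions of the $m$-Laplacian, applied on a finite cover of the compact annulus by balls contained in $\{|x|>R_0\}$, bounds $\inf u$ below by a positive multiple of a local $L^s$ mean of $u$. That mean is positive because $u>0$. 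This constant $c$ depends on $u$, $m$, $N$ and $R_0$, which is exactly the dependence allowed in the statement.

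Next, for $R>R_1$ I compare $u$ with a radial $m$-harmonic barrier on $A_R=\{R_1<|x|<R\}$. Radial solutions of $\Delta_m\phi=0$ satisfy $|\phi'|^{m-2}\phi'=\kappa r^{1-N}$, so they are spanned by constants and $r^{-k}$ with $k=(N-m)/(m-1)$ when $N\ne m$, and by $\log r$ when $N=m$. I set
\begin{equation*}
  \phi_R(x)=c\,\frac{|x|^{-k}-R^{-k}}{R_1^{-k}-R^{-k}}\ (N\ne m),\qquad \phi_R(x)=c\,\frac{\log(R/|x|)}{\log(R/R_1)}\ (N=m).
\end{equation*}
Then $\phi_R=c\le u$ on $|x|=R_1$, and $\phi_R=0<u$ on $|x|=R$. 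The weak comparison principle for $-\Delta_m$ therefore gives $u\ge\phi_R$ in $A_R$. To justify this step, I test with $(\phi_R-u)^+$ and use the strict monotonicity of $\xi\mapsto|\xi|^{m-2}\xi$; the test function lies in $W^{1,m}_0(A_R)$, which follows from the boundary inequalities, understood through the lower semicontinuous representative and the infimum bound above.

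Finally I let $R\to\infty$. If $N>m$, then $k>0$, $R^{-k}\to0$, and $\phi_R(x)\to cR_1^{k}|x|^{-k}$. This gives $u(x)\ge C|x|^{-(N-m)/(m-1)}$ for $|x|>R_1$, and the annulus $R_0<|x|\le R_1$ is covered by the compact-set positivity argument (after adjusting $C$). If $N<m$, then $-k>0$ and $\phi_R\to c$. If $N=m$, the logarithmic barrier also tends to $c$. In both cases $u\ge c$ on $|x|>R_1$, so $\liminf_{|x|\to\infty}u\ge c>0$.

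The main obstacle is the regularity bookkeeping for a merely weak supersolution: the positivity of the infimum on compact sets, and the validity of weak comparison with boundary data holding only in the trace sense. For the positivity I would rely on the weak Harnack inequality. For the comparison, I would first run it on slightly shrunken annuli where $u\ge c-\delta$ holds quasi-everywhere, and then let $\delta\to0$.
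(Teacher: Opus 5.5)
Your argument is correct and follows essentially the proof the paper points to: Serrin--Zou's comparison of $u$ with the radial $m$-harmonic function $C|x|^{-(N-m)/(m-1)}$. Your truncated barriers $\phi_R$ on $\{R_1<|x|<R\}$, followed by $R\to\infty$, are the rigorous way to run that comparison on an unbounded region, and they also treat the case $N\le m$ uniformly.

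One step is wrong. You extend the bound to $\{R_0<|x|\le R_1\}$ by compactness, but this region is not compactly contained in $\Omega$ when $\{|x|=R_0\}\subset\partial\Omega$. The bound can genuinely fail there. For example, take $N>m$, $k=(N-m)/(m-1)$, $\Omega=\{|x|>R_0\}$ and $u=R_0^{-k}-|x|^{-k}$; this is a positive $m$-harmonic function that tends to $0$ as $|x|\to R_0$. So you should assert the estimate only for $|x|\ge R_1$, e.g.\ $|x|\ge 2R_0$, which matches the paper's constant $R_0^{(N-m)/(m-1)}\min_{|x|=2R_0}u$. That is all the paper uses later.
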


We note that the proof of Lemma \ref{lem:u>C} is based upon the fact that
\begin{equation*}
  v(x)=C|x|^{-\frac{N-m}{m-1}}
\end{equation*}
is a fundamental solution of $-\Delta_m v=0$, together with the comparison principle, where
\begin{equation}\label{C(1)}
  C=R_0^{(N-m)/(m-1)}\min_{|x|=2R_0}u(x)>0.
\end{equation}

\noindent\textbf{The proof of Theorem \ref{corol:<p+q<}.}
Fix $x_0\in \mathbb R^N$ and  take $R>|x_0|+1$. According to Lemma \ref{lem:u>C} with $m=2$, and from \eqref{C(1)}, we obtain
\begin{equation}\label{u>cx-e}
  u(x)\geq C|x|^{-(N-2)},\quad |x|>1
\end{equation}
for $N>2$, where $C=\min_{|x|=2}u>0$. While if $N=2$, it follows that $u\geq l:=\inf_{\mathbb R^2}u>0$.

Set
\begin{equation*}
  m_R=\inf_{x\in B_R(x_0)}u(x).
\end{equation*}
Select $R$ large enough, such that $2^{2-N}CR^{-(N-2)}<\min_{|x|\leq 1}u$ for $N>2$. It follows from \eqref{u>cx-e} that
\begin{equation*}
m_R\geq \inf_{x\in B_{2R}(0)}u(x)\geq 2^{2-N}CR^{-(N-2)}.
\end{equation*}
Applying Theorem \ref{them:p+q>=(N+3)/}, we deduce that
\begin{equation*}
  |\nabla u^{-\sigma}(x_0)|\leq Cm_R^{-\sigma}R^{-1}\leq CR^{(N-2)\sigma-1}
\end{equation*}
for $N>2$ with $C=C(N,p,q,\sigma,\min_{|x|=2}u)>0$, and
\begin{equation*}
  |\nabla u^{-\sigma}(x_0)|\leq C(N,p,q,\sigma)m_R^{-\sigma}R^{-1}\leq C(N,p,q,\sigma,l)R^{-1}
\end{equation*}
for $N=2$.
It is clear that $(N-2)\sigma<1$ when $0<\sigma<\big(\sqrt{(N+8)/N}-1\big)/4$. Letting $R\to\infty$, we then obtain that $\nabla u^{-\sigma}(x_0)=0$. Due to the arbitrariness of $x_0\in\Omega$, we know $\nabla u^{-\sigma}\equiv0$ in $\mathbb R^N$ and hence $u$ is a constant, which contradicts that $0=\int_{\mathbb R^N}\langle\nabla u,\nabla\psi\rangle=\int_{\mathbb R^N} u^p|\nabla u|^q\psi=\infty$ for $\psi\in C^{\infty}_0(\mathbb R^N)$ when $q<0$.
\hfill$\Box$

\subsection{Estimates of $|\nabla u^{\beta}|$ with $\beta>0$}

We now choose a different auxiliary function from the one used above to derive the gradient estimates.

\noindent\textbf{Proof of Theorem \ref{them:p<=0}.}
We use the change of variable  $u=v^b$, as in \cite{Bidaut-Veron-Adv.N-2021}. Namely, we take $f(s)=-s^b$ with
\begin{equation*}
b=\frac{q-1}{p+q-1}.
\end{equation*}
Note that $p\leq 0$, and thus $b\geq 1$. With this choice, it has
\begin{equation*}
2(q-1)u^p|f'|^{q-2}f''-2pu^{p-1}|f'|^q=2\left((q-1)uf''-p|f'|^2\right)u^{p-1}|f'|^{q-2}=0,
\end{equation*}
\begin{equation*}
  \frac{f''}{f'}=\frac{b-1}{v},\quad\left(\frac{f''}{f'}\right)'=-\frac{b-1}{v^2},
\end{equation*}
and
\begin{equation*}
  \frac{|f'|^q}{f'}u^p=-b^{q-1}.
\end{equation*}
Setting $\alpha=0$ and $\gamma=1$ in Lemma \ref{lem:L(u-alpah-w-gamma)}, it follows from  \eqref{-D2v<-1/N*}, \eqref{eq-v}, \eqref{Delta-eta} and \eqref{nablaw.nablaeta} that
\begin{align*}
\mathcal L(w\eta)
  \leq & -(b-1)\frac{w^2}{v^2}\eta-\frac1N\left(b^{q-1}w^{\frac{q}2}+(b-1)\frac{w}v\right)^2\eta\\
  &+C(N)w|D^2\eta|+C(p,q)w^{\frac{q+1}2}|\nabla\eta|+C(p,q)\frac{w^{3/2}}v|\nabla\eta|+Cw\eta^{-1}|\nabla\eta|^2\\
  \leq &-\frac{b^{2(q-1)}}Nw^q\eta +C(N,p,q)\left(w|D^2\eta|+w^{\frac{q+1}2}|\nabla\eta|+w\eta^{-1}|\nabla\eta|^2\right)
\end{align*}
holds in $\{x\in B_{R'};\, |\nabla u(x)|>0\}$. Setting $z=w\eta$, we rewrite the inequality above as
\begin{align}\label{L(z)-u^b-**}\nonumber
  &\mathcal L(z)\\
  <&w^q\eta\left[-\frac{b^{2(q-1)}}N+C(N,p,q)\left(w^{1-q}\eta^{-1}|D^2\eta|+w^{\frac{1-q}2}\eta^{-1}|\nabla\eta|+w^{1-q}\eta^{-2}|\nabla\eta|^2\right)\right]\\\nonumber
  =&z^q\eta^{1-q}\left[-\frac{b^{2(q-1)}}N+C(N,p,q)\left(z^{1-q}\eta^{q-2}|D^2\eta|+z^{\frac{1-q}2}\eta^{\frac{q-3}2}|\nabla\eta|+z^{1-q}\eta^{q-3}|\nabla\eta|^2\right)\right].
\end{align}
Since $q>1$, we select $a=\max\{(3-q)/2,0\}<1$ in \eqref{dfi-eta}, and hence there exists $C_1=C(N,p,q)>0$ such that when
\begin{equation*}
z>A=A(N,p,q,R):=C_1R^{-\frac2{q-1}},
\end{equation*}
it has
\begin{equation*}
  C(N,p,q)\left(z^{1-q}R^{-2}+z^{\frac{1-q}2}R^{-1}\right)<\frac{b^{2(q-1)}}N,
\end{equation*}
which together with \eqref{L(z)-u^b-**} leads to that
\begin{equation*}
  \mathcal L(z)<0\quad \text{in}\ D^+_A:=\{x\in B_{R'};\,z>A\}.
\end{equation*}
By arguments analogous to those in the proof of Theorem \ref{them:p+q<(N+3)/}, we deduce that $z\leq C_1 R^{-2/(q-1)}$ in $B_{R'}$. Combining $z=w\eta$ together with the definition of $\eta$, we arrive at
\begin{equation*}
  |\nabla u^{\frac{p+q-1}{q-1}}(x_0)|=|\nabla v(x_0)|\leq C(N,p,q){\rm dist}^{-\frac1{q-1}}(x_0,\partial\Omega).
\end{equation*}
Since $x_0\in\Omega$ is arbitrary, the estimates above holds throughout $\Omega$. Letting ${\rm dist}(x,\partial\Omega)\to\infty$ in the case $\Omega=\mathbb R^N$, we conclude that $u$ is a constant. This completes the proof.
\hfill$\Box$

\section{Universal estimates via Liouville-type theorems}\label{sect:univer-esti}

\noindent\textbf{The proof of Theorem \ref{them:uni-u}.}
We set
 \begin{equation*}
   \beta=\frac{p+q-1}{2p+q},
 \end{equation*}
and
\begin{equation*}
  M(u)=\left(u^p|\nabla u|^{q}\right)^\beta.
\end{equation*}
The desired  estimates \eqref{uniform-u} can be written equivalently as
\begin{equation*}
  M(x):=M(u(x))\leq C{\rm dist}^{-1}(x,\partial\Omega),\quad x\in\Omega,
\end{equation*}
where $C=C(N,p,q)>0$.

Assume estimates \eqref{uniform-u} fails. Then there exist sequences of domains $\Omega_k$, positive solutions $u_k$ solves \eqref{eq1} in $\Omega_k$, and points $x_k\in \Omega_k$ such that
\begin{equation*}
  M_k(x_k):=\left(u^p_k|\nabla u_k|^{q}\right)^\beta(x_k)> 2k{\rm dist}^{-1}(x_k,\partial\Omega_k),\quad k=1,2,\ldots,
\end{equation*}
which means that
\begin{equation}\label{rk<}
  M_k(x_k){\rm dist}(x_k,\partial\Omega_k)> 2k.
\end{equation}
Defining
\begin{equation*}
  r_k=2kM_k^{-1}(x_k),
\end{equation*}
it follows from \eqref{rk<} that
\begin{equation*}
r_k<{\rm dist}(x_k,\partial\Omega_k),
\end{equation*}
and therefore $B_{r_k}(x_k)\subset\Omega_k$. To start the rescaling procedure, we frist consider the function
\begin{equation*}
  S_k(x):=M_k(x){\rm dist}(x,\partial B_{r_k}(x_k))=M_k(x)\left(r_k-|x-x_k|\right),\quad x\in B_{r_k}(x_k).
\end{equation*}
From \eqref{beta/p>0}, we have  $q\beta \geq0$, and thus $S_k$ is continuous.  Moreover,
\begin{equation*}
S_k(x)=0<S_k(x_k)=M_k(x_k)r_k=2k,\quad x\in\partial B_{r_k}(x_k).
\end{equation*}
Then there exists $a_k\in B_{r_k}(x_k)$ such that
\begin{equation}\label{max-S}
  S_k(a_k)=\max_{x\in \overline{B}_{r_k}(x_k)}S_k(x)=\max_{x\in B_{r_k}(x_k)}S_k(x),
\end{equation}
which implies that
\begin{equation}\label{rk-lambdak-Mxk}
\frac{M_k(x_k)}{M_k(a_k)}\leq \frac{r_k-|a_k-x_k|}{r_k}\leq 1.
\end{equation}

We now start the rescaling procedure. Set
\begin{equation}\label{def:labda}
  \lambda_k=M_k^{-1}(a_k).
\end{equation}
It follows from \eqref{rk-lambdak-Mxk} and the definition of $r_k$ that
\begin{equation*}
  2k\lambda_k=r_k\lambda_k M_k(x_k)\leq r_k-|a_k-x_k|.
\end{equation*}
This gives that for any $x\in B_{k\lambda_k}(a_k)$,
\begin{equation*}
|x-x_k|\leq |x-a_k|+|a_k-x_k|\leq k\lambda_k+r_k-2k\lambda_k\leq r_k,
\end{equation*}
and hence $B_{k\lambda_k}(a_k)\subset B_{r_k}(x_k)$. Furthermore, we know that for $x\in B_{k\lambda_k}(a_k)$,
\begin{align*}
2\left(r_k-|x-x_k|\right)&\geq 2\left(r_k-|x-a_k|-|a_k-x_k|\right)\\
&\geq 2r_k-2k\lambda_k-|a_k-x_k|+2k\lambda_k-r_k\\
&=r_k-|a_k-x_k|,
\end{align*}
from which it follows that
\begin{equation}\label{<2}
  \frac{r_k-|a_k-x_k|}{r_k-|x-x_k|}\leq 2, \quad x\in B_{k\lambda_k}(a_k).
\end{equation}
Combining \eqref{max-S} with \eqref{<2} yields that
\begin{equation}\label{M(x)<2M(ak)}
  M_k(x)\leq 2M_k(a_k),\quad x\in B_{k\lambda_k}(a_k).
\end{equation}
We rescale $u_k$ by setting
\begin{equation*}
  v_k(y):=\lambda_k^{(2-q)/(p+q-1)}u_k(\lambda_ky+a_k),\quad y\in B_k(0).
\end{equation*}
Note from \eqref{eq1} that $v_k$ satisfies
\begin{equation*}
  -\Delta v_k =v_k^p|\nabla v_k|^q\quad \text{in}\ B_k(0).
\end{equation*}
We deduce from \eqref{def:labda} and \eqref{M(x)<2M(ak)} that
\begin{equation}\label{u-nablau(0)=1}
  \left(v^p_k|\nabla v_k|^{q}\right)^\beta(0)=\lambda_k M_k(a_k)=1,
\end{equation}
and
\begin{equation*}
  \left(v^p_k|\nabla v_k|^{q}\right)^\beta(y)=\lambda_kM_k(\lambda_ky+a_k)\leq 2,\quad y\in B_k(0).
\end{equation*}
Then for $\beta>0$, we conclude that
\begin{equation*}
  v_k^p|\nabla v_k|^q\leq 2^{\frac1{\beta}}\quad \text{in}\ B_k(0).
\end{equation*}
Using the regularity theory of elliptic equation,  we obtain that there exists $s\in (0,1)$ such that $v_k$ is bounded in $C^{2,s}_{\rm loc}(\mathbb R^N)$. Then, up to a subsequence, there exists a nonnegative function $v\in C^2_{loc}(\mathbb R^N)$ such that
\begin{equation*}
  v_k\to v\quad \text{in}\ C^2_{loc}(\mathbb R^N),\quad k\to \infty.
\end{equation*}
 Moreover, we obtain from \eqref{u-nablau(0)=1} that $v^p|\nabla v|^q(0)=1$,  and hence $v$ is nontrivial. Thus, by the strong maximum principle, $v$ is a positive nontrivial solution to equation \eqref{eq1} in $\mathbb R^N$. This contradicts the Liouville-type theorem for $(p,q)\in \mathcal R_L$ and thus Theorem \ref{them:uni-u} holds.
\hfill$\Box$

\vskip 3mm
\noindent{\bf Conflict of interest.} {No potential conflict of interest was reported by the authors.}

\vskip 3mm
\noindent{\bf Data availability.} {No data was used for the research described in the article.}

\vskip 3mm
\noindent{\bf Acknowledgments.} {The research of W.G. Liang was supported by the  Fundamental  Research  Funds  for  the Central Universities (No. xzy022025046). The research of Z.C. Zhang was partially supported by the National Natural
Science Foundation of China (Nos. 12271423 and 12671248).
}


\begin{thebibliography}{10}

\bibitem{Attouchi-CVPDE-2020}
A. Attouchi and Ph. Souplet,
\newblock Gradient blow-up rates and sharp gradient estimates for diffusive
  {H}amilton-{J}acobi equations,
\newblock {\em Calc. Var. Partial Differ. Equ.} 59 (2020), Paper No. 153, 28 pp.

\bibitem{Baldelli-Filippucci-2025-RIMU}
L.~Baldelli and R.~Filippucci,
\newblock A priori estimates for convective quasilinear equations and systems,
\newblock {\em Rend. Istit. Mat. Univ. Trieste.} 57 (2025), Paper No. 16, 28 pp.

\bibitem{Bidaut-Veron-Adv.N-2021}
M.-F. Bidaut-V\'{e}ron,
\newblock Liouville results and asymptotics of solutions of a quasilinear
  elliptic equation with supercritical source gradient term,
\newblock {\em Adv. Nonlinear Stud.} 21 (1) (2021) 57--76.

\bibitem{Veron-2014-JFA}
M.-F. Bidaut-V\'{e}ron, M. Garc\'{\i}a-Huidobro, and L.
  V\'{e}ron,
\newblock Local and global properties of solutions of quasilinear
  {H}amilton-{J}acobi equations,
\newblock {\em J. Funct. Anal.} 267 (9) (2014) 3294--3331.

\bibitem{Bidaut-Veron-Duke-2019}
M.-F. Bidaut-V\'{e}ron, M. Garc\'{\i}a-Huidobro, and L.
  V\'{e}ron,
\newblock Estimates of solutions of elliptic equations with a source reaction
  term involving the product of the function and its gradient,
\newblock {\em Duke Math. J.} 168 (8) (2019) 1487--1537.

\bibitem{Caffarelli-Giads-CPAM-1989}
L.A. Caffarelli, B. Gidas, and J. Spruck,
\newblock Asymptotic symmetry and local behavior of semilinear elliptic
  equations with critical {S}obolev growth,
\newblock {\em Comm. Pure Appl. Math.} 42 (3) (1989) 271--297.

\bibitem{Caristi-AdcanceD-1997}
G. Caristi and E. Mitidieri,
\newblock Nonexistence of positive solutions of quasilinear equations,
\newblock {\em Adv. Differ. Equ.} 2 (3) (1997) 319--359.

\bibitem{Chang-Hu-Zhang-NA-2022}
C.H. Chang, B. Hu, and Z.C. Zhang,
\newblock Liouville-type theorems and existence of solutions for quasilinear
  elliptic equations with nonlinear gradient terms,
\newblock {\em Nonlinear Anal.} 220 (2022), Paper No. 112873, 29 pp.

\bibitem{Chang-JDE-2023}
C.H. Chang, B. Hu, and Z.C. Zhang,
\newblock Gradient blowup behavior for a viscous {H}amilton-{J}acobi equation
  with degenerate gradient nonlinearity,
\newblock {\em J. Differ. Equ.} 359 (2023) 23--66.

\bibitem{Chang-DCDS-2020}
C.H. Chang, Q.C. Ju, and Z.C. Zhang,
\newblock Asymptotic behavior of global solutions to a class of heat equations
  with gradient nonlinearity,
\newblock {\em Discrete Contin. Dyn. Syst.} 40 (10) (2020) 5991--6014.

\bibitem{Chang-Zhang-NA-2026}
C.H. Chang and Z.C. Zhang,
\newblock A priori estimates and existence of solutions for quasilinear
  elliptic equations with nonlinear gradient terms,
\newblock {\em Nonlinear Anal.} 267 (2026), Paper No. 114064, 18 pp.

\bibitem{Chen-Li-Duke-1991}
W.X. Chen and C.M. Li,
\newblock Classification of solutions of some nonlinear elliptic equations,
\newblock {\em Duke Math. J.} 63 (3) (1991) 615--622.

\bibitem{Chen-Li-Wu-Xin-MathAnn-2026}
W.X. Chen, C.M. Li, L.Y. Wu, and Z.P. Xin,
\newblock Refined regularity for nonlocal elliptic equations and applications,
\newblock {\em Math. Ann.} 395 (2026), Paper No. 60, 40 pp.

\bibitem{Cianchi-CPDE-2011}
A. Cianchi and V.G. Maz'ya,
\newblock Global {L}ipschitz regularity for a class of quasilinear elliptic
  equations,
\newblock {\em Comm. Partial Differ. Equ.} 36 (1) (2011) 100--133.

\bibitem{Girant-Goffi-ARMA-2021}
M.~Cirant and A.~Goffi,
\newblock On the problem of maximal {$L^q$}-regularity for viscous
  {H}amilton-{J}acobi equations,
\newblock {\em Arch. Ration. Mech. Anal.} 240 (3) (2021) 1521--1534.

\bibitem{Dancer-MathZ-1998}
E.N. Dancer,
\newblock Superlinear problems on domains with holes of asymptotic shape and
  exterior problems,
\newblock {\em Math. Z.} 229 (3) (1998) 475--491.

\bibitem{Filipuucci-NA-2009}
R. Filippucci,
\newblock Nonexistence of positive weak solutions of elliptic inequalities,
\newblock {\em Nonlinear Anal.} 70 (8) (2009) 2903--2916.

\bibitem{Fillippucci-JDE-2011}
R. Filippucci,
\newblock Nonexistence of nonnegative solutions of elliptic systems of
  divergence type,
\newblock {\em J. Differ. Equ.} 250 (1) (2011) 572--595.

\bibitem{Filippucci-Pucci-Souplet-Adv.stu-2020}
R. Filippucci, P. Pucci, and Ph. Souplet,
\newblock A {L}iouville-type theorem for an elliptic equation with
  superquadratic growth in the gradient,
\newblock {\em Adv. Nonlinear Stud.} 20 (2) (2020) 245--251.

\bibitem{Filippucci-Pucci-Souplet-2020-CPDE}
R. Filippucci, P. Pucci, and Ph. Souplet,
\newblock A {L}iouville-type theorem in a half-space and its applications to
  the gradient blow-up behavior for superquadratic diffusive
  {H}amilton-{J}acobi equations,
\newblock {\em Comm. Partial Differ. Equ.} 45 (4) (2020) 321--349.

\bibitem{Gidas-Spruck-1981-CPAM}
B.~Gidas and J.~Spruck,
\newblock Global and local behavior of positive solutions of nonlinear elliptic
  equations,
\newblock {\em Comm. Pure Appl. Math.} 34 (4) (1981) 525--598.

\bibitem{Grenon-CRMA-2006}
N. Grenon, F. Murat, and A. Porretta,
\newblock Existence and a priori estimate for elliptic problems with
  subquadratic gradient dependent terms,
\newblock {\em C. R. Math. Acad. Sci. Paris.} 342 (1) (2006) 23--28.

\bibitem{Guo-Zhang-ProAMS-2025}
C. Guo and Z.C. Zhang,
\newblock A {L}iouville theorem for the quasilinear elliptic inequality on
  complete {R}iemannian manifolds,
\newblock {\em Proc. Amer. Math. Soc.} 153 (3) (2025) 1069--1075.

\bibitem{Handong-Wang-2025-JDE}
D. Han, J. He, and Y.D. Wang,
\newblock Universal gradient estimates for quasilinear {H}amilton-{J}acobi type
  equation on manifolds and {L}iouville theorems,
\newblock {\em J. Differ. Equ.} 449 (2025), Paper No. 113724, 36 pp.

\bibitem{Han-He-Wang-JFA-2026}
D. Han, J. He, and Y.D. Wang,
\newblock Gradient estimates for {$\Delta_pu + A|\nabla u|^q + Bu^r + C = 0$}
  on manifolds and applications,
\newblock {\em J. Funct. Anal.} 290 (2026), Paper No. 111274, 40 pp.

\bibitem{He-Hu-Wang-MZ-2026}
J. He, J.C. Hu, and Y.D. Wang,
\newblock Nash--{M}oser iteration approach to the logarithmic gradient estimates
  and {L}iouville properties of quasilinear elliptic equations on manifolds,
\newblock {\em Math. Z.} 313 (2026), Paper No. 6, 41 pp.

\bibitem{Tommaso-Porretta-2016-CPDE}
T. Leonori and A. Porretta,
\newblock Large solutions and gradient bounds for quasilinear elliptic
  equations,
\newblock {\em Comm. Partial Differ. Equ.} 41 (6) (2016) 952--998.

\bibitem{Lions-1985-JAM}
P.-L. Lions,
\newblock Quelques remarques sur les probl\`emes elliptiques quasilin\'{e}aires
  du second ordre,
\newblock {\em J. Anal. Math.} 45 (1985) 234--254.

\bibitem{Lu-arXive-2026}
Z.H. Lu,
\newblock Global and local properties of solutions of elliptic equations with a
  nonlinear term involving the product of the function and its gradient, 
\newblock {arXiv:2602.19999,} preprint (2026).

\bibitem{Lu-Zhu-JFA-2026}
Z.H. Lu and L.L. Zhu,
\newblock Liouville theorems and gradient estimates for a class of quasilinear
  elliptic equations with product gradient structures,
\newblock {\em J. Funct. Anal.} 291 (2026), Paper No. 111620, 34 pp.

\bibitem{Ma-Wu-Bull-2026}
X.-N. Ma and W.Z. Wu,
\newblock Liouville theorem for elliptic equations with a source reaction term
  involving the product of the function and its gradient in {$\Bbb R^n$},
\newblock {\em Bull. Sci. Math.} 206 (2026), Paper No. 103747, 30 pp.

\bibitem{Pokhozhaev-2001}
\`E. Mitidieri and S.~I. Pokhozhaev,
\newblock A priori estimates and the absence of solutions of nonlinear partial
  differential equations and inequalities,
\newblock {\em Trudy Mat. Inst. Steklov.} 234 (2001) 1--384.

\bibitem{Polacik-Quittner-Souplet}
P. Pol\'{a}\v{c}ik, P. Quittner, and Ph. Souplet,
\newblock Singularity and decay estimates in superlinear problems via
  {L}iouville-type theorems. {I}. {E}lliptic equations and systems,
\newblock {\em Duke Math. J.} 139 (3) (2007) 555--579.

\bibitem{Q-W-J-CVPDE-2026}
Z. Qiu, Y.D. Wang, and J. Yang,
\newblock Gradient estimates for {$p$}-{L}aplacian equation with cubic
  polynomial nonlinearity on {R}iemannian manifolds,
\newblock {\em Calc. Var. Partial Differ. Equ.}  65 (8) (2026), Paper No. 246, 35 pp.

\bibitem{Serrin-Zhou-Acta}
J. Serrin and H.H. Zou,
\newblock Cauchy-{L}iouville and universal boundedness theorems for quasilinear
  elliptic equations and inequalities,
\newblock {\em Acta Math.} 189 (1) (2002) 79--142.

\bibitem{Souplet-Zhang}
Ph. Souplet and Q.S. Zhang,
\newblock Global solutions of inhomogeneous {H}amilton-{J}acobi equations,
\newblock {\em J. Anal. Math.} 99 (2006) 355--396.

\bibitem{Sun-Xiao-Xu-MathAnn-2022}
Y.H. Sun, J. Xiao, and F.H. Xu,
\newblock A sharp {L}iouville principle for {$\Delta_m u+u^p|\nabla u|^q\le0$}
  on geodesically complete noncompact {R}iemannian manifolds,
\newblock {\em Math. Ann.} 384 (3--4) (2022) 1309--1341.

\bibitem{Wang-Wei-JDE-2023}
Y.D. Wang and G.D. Wei,
\newblock On the nonexistence of positive solution to {$\Delta u + au^{p+1} =
  0$} on {R}iemannian manifolds,
\newblock {\em J. Differ. Equ.} 362 (2023) 74--87.



\end{thebibliography}
\end{document}